\documentclass{egpubl}
\usepackage{egsgp2026}
\usepackage{caption}
\usepackage{wrapfig}

\usepackage{amsthm}
\usepackage[T1]{fontenc}
\usepackage{dfadobe}  
\usepackage[linesnumbered,ruled,vlined]{algorithm2e}

\usepackage{cite}  
\BibtexOrBiblatex  
\electronicVersion
\PrintedOrElectronic
\ifpdf \usepackage[pdftex]{graphicx} \pdfcompresslevel=9
\else \usepackage[dvips]{graphicx} \fi

\usepackage{egweblnk} 

\usepackage{caption, subcaption}
\usepackage{amsmath, amssymb}
\usepackage{booktabs}
\usepackage{multirow}
\usepackage{siunitx}

\newcommand{\ind}[1]{\mathrm{ind}(#1)}

\newtheorem{theorem}{Theorem}  
\newtheorem{definition}{Definition}
\newtheorem{lemma}{Lemma}      

\newcommand{\noam}[1]{}

\newcommand{\mesh}{\mathcal{M}}

\newcommand{\z}{\textbf{z}}
\newcommand{\dir}{\vec{r}}

\newcommand{\n}{\textbf{n}}
\newcommand{\e}{\textbf{e}}
\newcommand{\twine}{\text{\reflectbox{\e}}}

\newcommand{\vv}{\textbf{u}}
\newcommand{\flux}{\mathcal{F}}

\newcommand{\lap}{\textbf{L}}
\newcommand{\real}{\mathbb{R}}

\newcommand{\tri}{\textbf{t}} 

\newcommand{\sign}[1]{\text{sign}\parr{#1}}
\newcommand{\norm}[1]{\left\lVert#1\right\rVert}
\newcommand{\brac}[1]{\left[#1\right]}
\newcommand{\tribrac}[1]{\left<#1\right>}

\newcommand{\parr}[1]{\left(#1\right)}
\newcommand{\set}[1]{\left\{#1\right\}}

\newtheorem{property}{Property}

\title[A Tutte-Like Theorem for Injective Flows on Meshes]%
      {A Tutte-Like Theorem for Injective Flows on Meshes}

\author[D. Kaushik \& N. Aigerman]
{\parbox{\textwidth}{\centering D. Kaushik$^{1,2}$ and N. Aigerman$^{1,2}$}
        \\
{\parbox{\textwidth}{\centering $^1$Université de Montréal \\ $^2$Mila - Quebec Artificial Intelligence Institute}
}
}

\begin{document}

\maketitle
\thispagestyle{plain} 
\pagestyle{plain}     

\begin{abstract}
This work proves a theorem regarding injectivity of continuous flows that are defined via discrete vector fields on triangle meshes. In similar fashion to Tutte's embedding, the theorem shows a connection between harmonicity and injectivity. Specifically,  a discrete harmonic function is computed over a spherical mesh, and its $90^\circ$-rotated gradients are taken as the vector field. We show that this leads to an injective flow over the mesh, i.e., when two points are advected along the vector field, their trajectories are guaranteed to not intersect. The core insight behind this theorem is that the characterization of the Poincaré-Hopf indices of discrete harmonic 1-forms is paralleled in the combinatorial rules for injective vector fields. Similarly to Tutte's embedding, the injective vector field can be computed by simply solving a sparse linear system. Furthermore, it is differentiably dependent on the entries of the Laplacian, enabling unconstrained gradient-based optimization of the vector field while preserving injectivity. The advection itself can be computed by a quick and accurate algorithm. We experimentally validate the injectivity of the results, as well as show examples of differentiable optimization, such as computing an injective approximation to a given non-injective vector field, and fitting an injective vector field to satisfy constraints.
\end{abstract}  

\section{Introduction}

This paper proposes a method for producing discrete, triangle-based tangent vector fields that define injective flows over triangle meshes, i.e., advecting points along the vector field yields continuous trajectories for any given point on the surface,  such that the trajectories  of any two points do not cross. Figure~\ref{fig:crab} shows such an advection of randomly sampled points on a mesh, their trajectories traced behind them as faded paths.

The mathematical challenge in ensuring the induced flow is injective lies in the discrete nature of the vector field. Indeed, when considering a \emph{continuous} vector field, to ensure injectivity it is sufficient to require the vector field to be Lipschitz  (per the Picard–Lindelöf theorem). In contrast, discrete, face-based vector fields are piecewise-constant and \textit{discontinuous}; thus, their injectivity is a much more brittle property, which requires a different, more discrete approach, such as the satisfaction of combinatorial, triangle-based rules~\cite{campen2016}.



Our main theoretical contribution --  Theorem~\ref{thm:injectivity} -- is akin to a flow-based version of Tutte's embedding~\cite{tutte1963,floater1997parametrization}, proving that a specific class of \textit{discrete harmonic functions} induces injective flows. Specifically, we prove that solving a discrete Laplace equation on a spherical mesh with specific boundary conditions yields a discrete harmonic function whose gradient vectors, rotated by $90^\circ$, define an injective flow. Geometrically, this implies that each point's trajectory lies on a fixed level set of the harmonic function. Similarly to Tutte's embedding, these vector fields can be computed in a simple and efficient manner by solving a sparse linear system.

\begin{figure}[t]
    \centering
    \includegraphics[width=\linewidth]{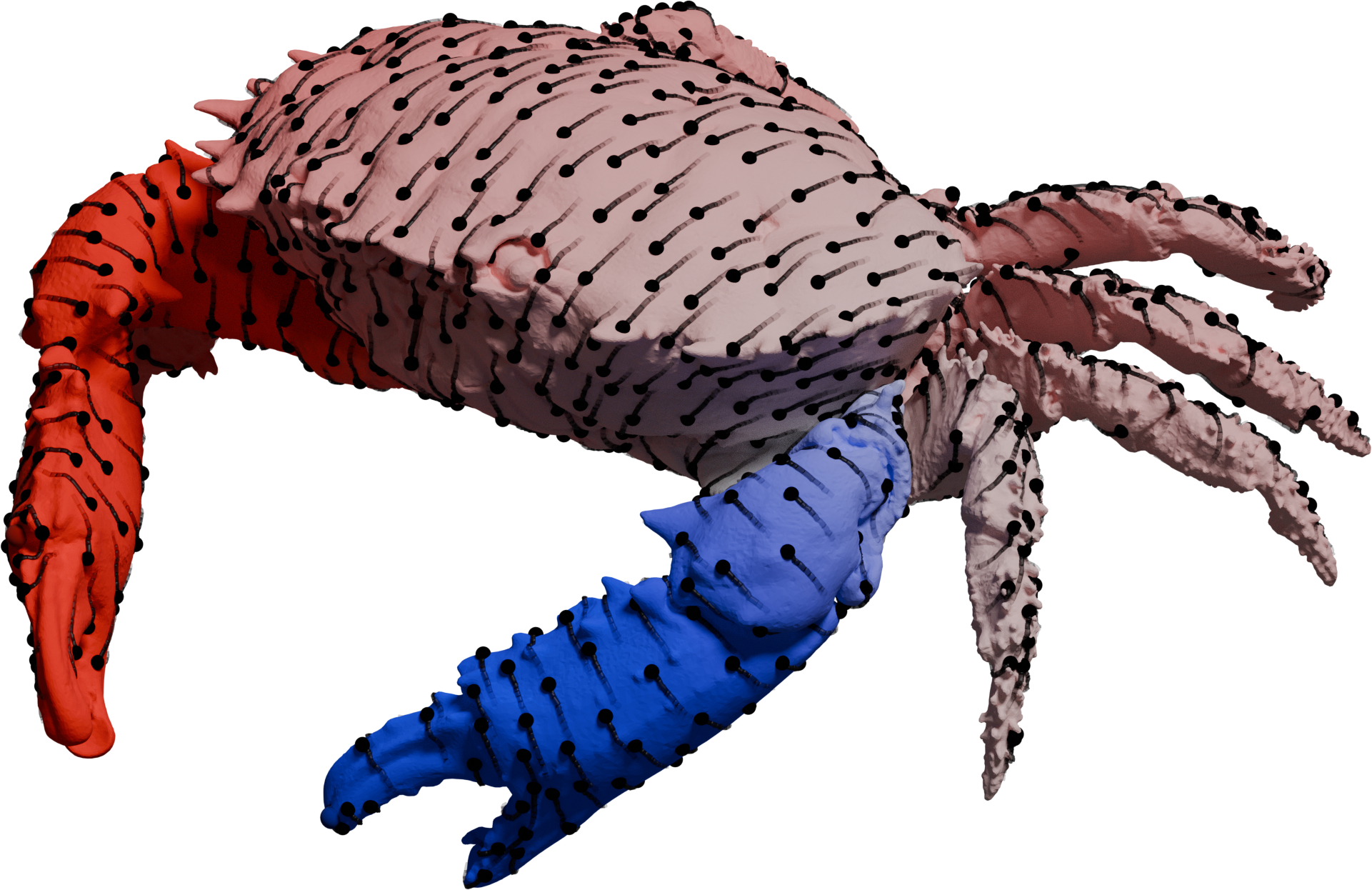}
    \caption{Advecting points (in black) on a mesh, with respect to a harmonic function (visualized via colors). Our main result guarantees that this flow is injective: no two trajectories intersect.}
    \label{fig:crab}
\end{figure}

\begin{figure*}[t]
     \centering
     \begin{subfigure}[t]{0.49\textwidth}
         \includegraphics[width=\textwidth]{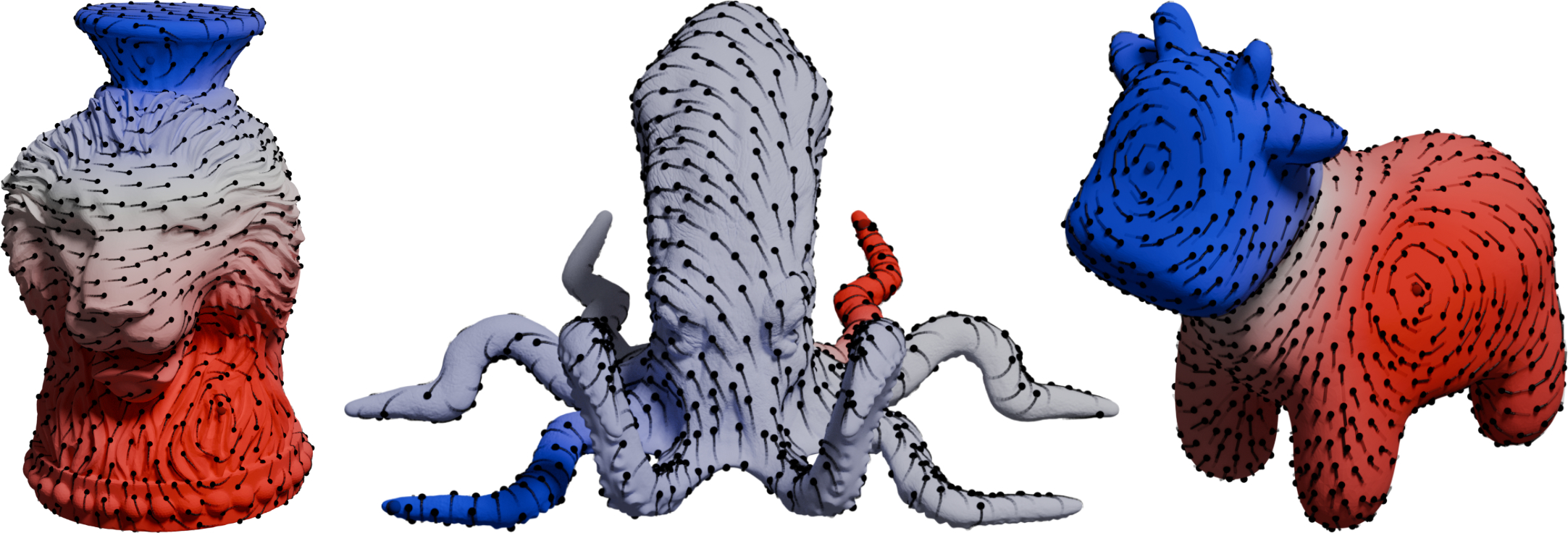}
         \caption{Cotangent Laplacian}
     \end{subfigure}
     \begin{subfigure}[t]{0.49\textwidth}
         \includegraphics[width=\textwidth]{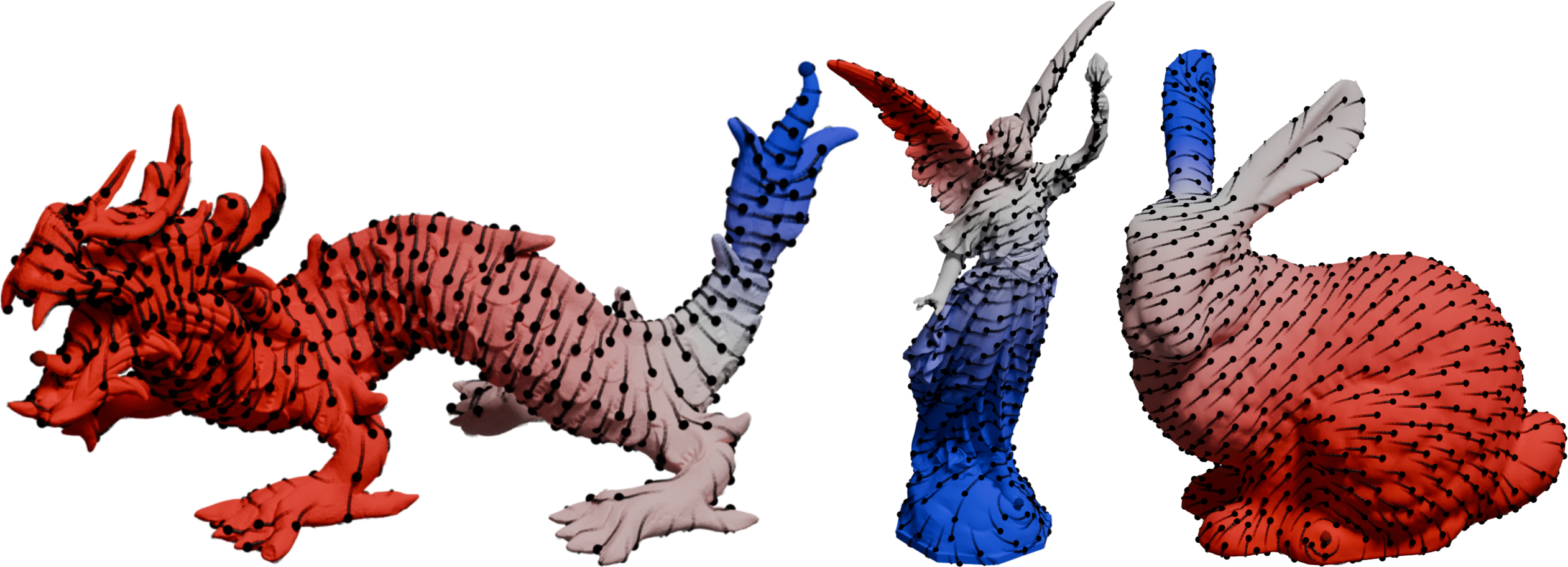}
         \caption{Mean Value Coordinates Laplacian}
         \label{subfig: mvc init}
     \end{subfigure}
     
    \caption{\textbf{Injective flows defined by standard Laplacians used in geometry processing. } Any choice of Laplacian with positive edge weights yields a different discrete harmonic function, which is guaranteed by Theorem~\ref{thm:injectivity} to result in an injective flow. We show results for the cotangent Laplacian~\cite{pinkall1993} and the mean value coordinates (MVC)~\cite{floater2003mean} Laplacian.}
    \label{fig:cot_and_mvc}
\end{figure*}

Theorem~\ref{thm:injectivity} is attained through two main insights (Lemmas~\ref{lem:flux_1form_equiv} and \ref{lem:us_gortler_equiv}) regarding a connection between two rather different approaches for injective mappings of triangle meshes: the framework of \textit{bijective foliations}~\cite{campen2016} defines conditions on vector fields to yield injective paths on meshes; however, these conditions are combinatorial, and hence were previously attained by constructing these vector fields in a combinatorial, algorithmic manner. We observe that these conditions for injectivity are exactly reflected in harmonic functions' specific properties (namely, a discrete Poincaré-Hopf index theorem) that were characterized by Gortler et al.~\cite{gortler2006} -- see Figure~\ref{fig:vertex_inj}, which exhibits the equivalence between the two approaches. This observation, in turn, enables the construction of  vector fields that satisfy the injectivity conditions from~\cite{campen2016} via harmonic functions.

While we consider the proof  our main technical contribution, it holds a secondary practical implication of a differentiable parameterization of injective flows on meshes: any assignment of positive edge weights of the Laplacian matrix modifies the Laplace equation, and hence the resulting vector field and flow, in a differentiable manner. Furthermore, our  discrete vector field defines a flow that is piecewise-linear on the triangles of the mesh, leading to simple, efficient and accurate computation,  in contrast to techniques that define flows via non-discrete, continuous vector fields~\cite{chen2018neural,chen2023riemannianfm}; in order to compute their corresponding flow, one  needs to integrate the vector field via quadrature rules, which is both expensive and may lead to inaccuracies. 

To verify our theory, we validate the injectivity of our vector field both qualitatively and quantitatively. We additionally show preliminary demonstrations of the utility of a differentiable representation of injective vector fields, by optimizing the injective field to best approximate a given non-injective field, or fitting it to satisfy constraints.


    

\section{Related Works}

\paragraph*{Vector field design.}
Vector field design has been heavily studied in computer graphics and geometry processing~\cite{zhang2006vector,degoes2016, vaxman2016}. Similarly to our approach, previous papers have considered vector fields as gradients of functions~\cite{diamanti2015integrable} and sought to integrate them~\cite{ray2006periodic}.  This integration is similar to a flow and is especially common in the context of global parameterization~\cite{myles2014robust} and quad-meshing~\cite{bommes2013quad}. In these applications, the cases leading to non-injectivity of the \emph{flow} simply lead to singularities in the parameterization/quad mesh (and \emph{not} to loss of injectivity of, e.g., the parameterization) -- these are in general acceptable, hence those methods usually focus on minimizing the number of such singularities. Many works focus solely on unit-length vector fields~\cite{solomon2019optimal,boksebeld2022high,verhoeven2022dev2pq,palmer2024lifting,wang2025power} and their extensions to polyvector fields~\cite{palacios2007rotational,bommes2010practical,diamanti2014designing,panozzo2014frame} in which the flow direction can be chosen out of several options at each triangle.
Another  common goal is optimizing the smoothness of the vector/directional field~\cite{fisher2007design,knoppel2013globally}, while ensuring adherence to singularity assignment~\cite{crane2010}; others optimize for vector fields which produce an as-rigid-as-possible flow~\cite{benchen2010killing}. Flows on vector fields additionally have a close connection to geodesics~\cite{sharp2019vector,vekhter2019weaving}.

\paragraph*{Injective maps between meshes.}
The flows we compute define a map between a mesh and itself; defining such injective mappings of meshes is a highly researched area. Most previous approaches use a \emph{common domain} into which the meshes are mapped, thereby defining a mapping by composition of maps into the common domain~\cite{kraevoy2004,aigerman2014lifted,aigerman2015seamless,aigerman2016hyperbolic,morreale2021,schmidt2020inter}. Of note is one of the first works on mesh-mesh injective mappings~\cite{schreiner2004inter}, which directly deals with injectively sliding the vertices of one mesh over the triangles of the other in a sort of ad hoc flow. Bijective maps between domains can also be defined via integral curves~\cite{huynh2015}, global structural decompositions~\cite{nieser2011, bommes2013, hinderink2024}, or by auxiliary structures~\cite{jiang2020, jiang2021, liu2024}. 
Lastly, simplicial foliations~\cite{campen2016, hinderink2021} can be used for defining vector-aligned slices on surfaces and volumes, and then matching between corresponding slices on two meshes. These slices are in turn created by an injective vector field -- we directly use the conditions formulated by~\cite{campen2016} and show they hold for specific harmonic functions.

\paragraph*{Harmonic functions and injective mappings.}
There is a deep  connection between harmonic functions over 2-manifolds and injective mappings, with the most central results  being the Radó–Kneser–Choquet theorem and its discrete counterpart, Tutte's embedding~\cite{tutte1963,floater1997parametrization,floater2003one}. Tutte's embedding states that computing two discrete harmonic functions representing the $x,y$ coordinates of a disk-topology mesh, while prescribing the boundary conditions to a convex polygon, yields an injective embedding of the mesh to 2D. This has been extended further into other topologies~\cite{gortler2006,aigerman2015orbifold} and non-Euclidean target domains~\cite{tsui2013globally, aigerman2016hyperbolic,aigerman2017spherical}. Other methods use more complex harmonic structures for injectivity, which require more careful optimization~\cite{bright2017harmonic,hefetz2019subspace}. 
Previous works have considered the use of  harmonic functions and flowing via their gradients for quad-meshing~\cite{dong2005harmonic}, without considerations of injectivity, and for injective parameterization, but not flows~\cite{tong2006designing}. Some use harmonic gradients for constructing maps; however, the injectivity of these is guaranteed only in  the smooth setting, relying on the harmonic maps' infinite smoothness~\cite{lin2015interior} and the existence of hessians~\cite{xia2010direct}. In contrast, we prove a discrete theorem, showing that injectivity holds directly on the discrete gradients of the discrete harmonic function, for \emph{any} Laplacian matrix. We further leverage the fact that discrete harmonic functions are defined with respect to any Laplacian matrix defined with positive weights on the mesh's edges, and optimize the  injective vector field via the Laplacian's entries. Such optimization of the Laplacian in order to control injective maps has been performed before~\cite{aigerman2024generative,sun2024tuttenet}.

\paragraph*{Computing continuous flows.}
Computing \emph{continuous} injective flows  is a highly active area, especially in machine learning where it is used for generative techniques~\cite{lipman2023}.  These flows can be computed in a differentiable process via a neural network that produces a continuous vector field  on Euclidean spaces~\cite{chen2018neural} and on manifolds~\cite{falorsi2020, lou2020, mathieu2020,bose2020, rozen2021, chen2024,benhamu2022}. Some previous works have used such flows to define deformations of 3D shapes~\cite{jiang2020shapeflow}. Injectivity is guaranteed as long as the vector field is Lipschitz. In contrast, we focus on piecewise-constant vector fields defined on a piecewise-flat triangle mesh, and on ensuring they still define an injective flow.

\section{Problem Setting and Goal}

\subsection{Preliminaries}
We consider sphere-topology manifold triangle meshes $\mesh$. 
We will focus on working with oriented \textit{half-edges} $\e=(i,j)$, where, for a triangle $(i,j,k)$, its oriented half-edges follow the triangle's orientation: $(i,j),(j,k),(k,i)$. Note that each half-edge belongs to exactly one triangle. The \textit{twin} half-edge of $\e=(i,j)$ is denoted $\twine=(j,i)$.  Each such half-edge has its \emph{edge normal}, defined as the vector obtained by inverting the unit-normalized directed edge vector and rotating it by $90^\circ$ in the tangent plane of its triangle:
\begin{equation}
	\n_{\e}^\tri=-\textbf{R}^{90^\circ}_\tri \parr{v_j-v_i}/\norm{v_i-v_j}, 
\end{equation}
where $\textbf{R}^{90^\circ}_\tri$ is the matrix that rotates vectors in the tangent plane of triangle $\tri$ by $90^\circ$. 
Note that this normal points inward into the triangle, towards the vertex $k$ opposite to the edge $(i,j)$. 

We will consider \textit{discrete functions}, i.e., an assignment of a scalar $z_i$ to each vertex $v_i$ of the mesh. We will also use the mesh's \emph{gradient} operator $\nabla$, defined via linear finite elements, assigning to each triangle the gradient vector of the affine function interpolating between the three values on the vertices, denoted $\nabla_\tri \z$. 

In standard fashion, we assign a piecewise-constant vector field $\vv$ over the faces of the mesh, defined by assigning a tangent vector $\vv_\tri\in T_\tri$ to each triangle $\tri$ of the mesh (see Figure~\ref{fig:flux}). Each half-edge $\e$ thus has exactly one corresponding tangent vector, belonging to the triangle $\tri$ that contains $\e$.

A \emph{flow} over the mesh is defined for any point $p\in\mesh$ (not necessarily a vertex), assigning it a continuous path $p_t$ parameterized by a time parameter $t\in\brac{0,1}$ such that the flow starts from $p$, i.e., $p_0 = p$, and advects that point to somewhere else. The derivative with respect to time $t$ is denoted $\frac{\partial p_t}{\partial t}$.
We consider flows with respect to a vector field $\vv$ such that the flow ``agrees'' with the vector field inside each triangle, namely, if at time $t^*$ the point $p_{t^*}$ is  strictly inside triangle $\tri$ -- i.e., away from the triangle's edges --  then the speed vector of the point is equal to the vector field assigned to that triangle,
\begin{equation}
    \frac{\partial p_t}{\partial t}\parr{t^*} = \vv_\tri \text{\ \ \ \ \ \            (if $p_{t^*}\in \tri$)}.
\end{equation}

\begin{figure}[t]
    \centering
    \includegraphics[width=\linewidth]{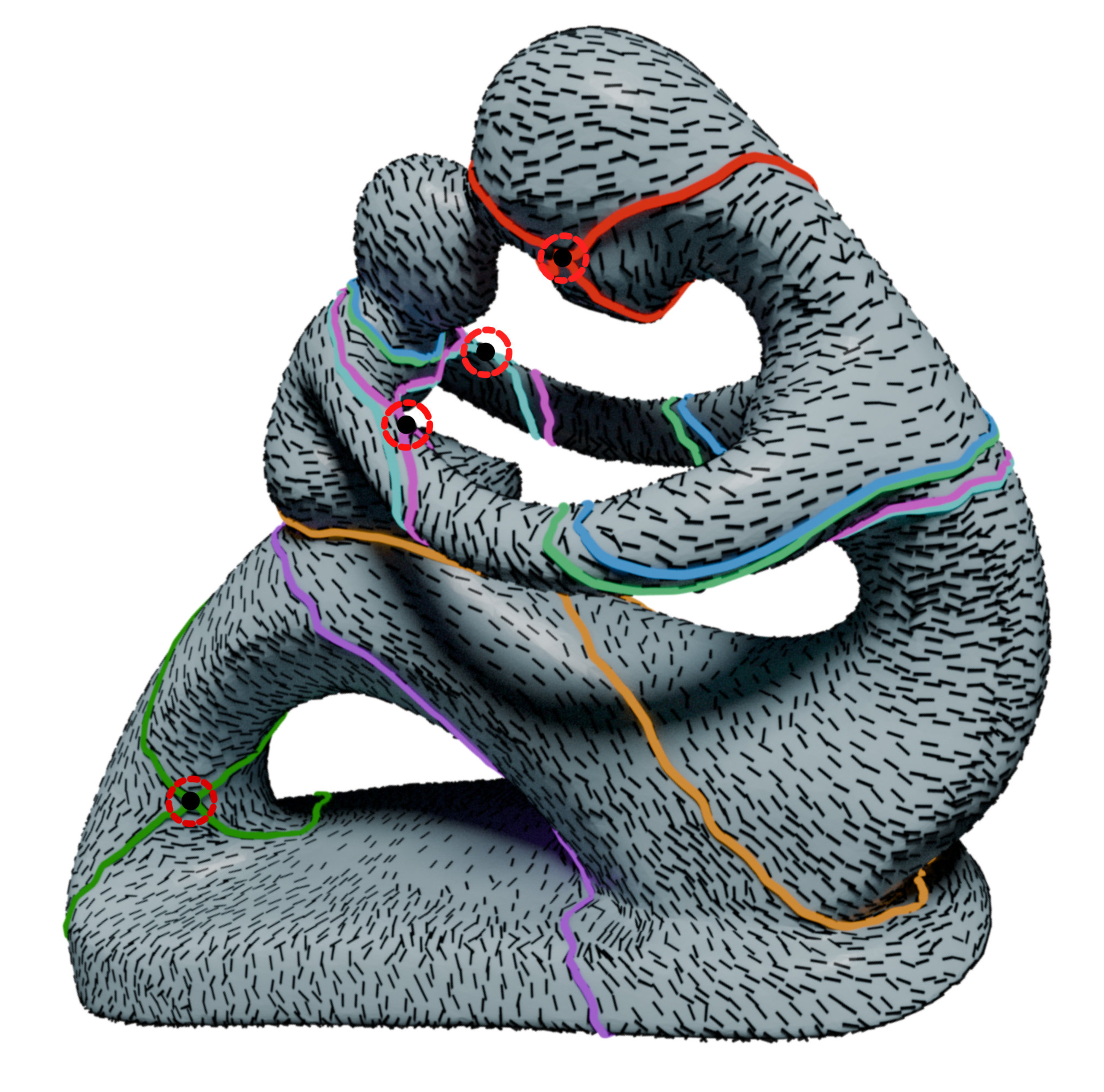}
    \caption{\textbf{Non-injective anti-example.} A discrete harmonic function does not necessarily define an injective flow, e.g., when applied to a non-spherical topology. Trajectories visualized with different colors. Intersections where flows from multiple directions cross are highlighted with red circles.}
    \label{fig:non-injective-isolines}
\end{figure}

In other words, the vector field defines a flow inside each triangle  by advecting each point using the vector field as the temporal derivative, which for a constant vector field reduces to a linear formula:
\begin{equation}
    p_t = p_0+t \vv_\tri.
\end{equation} 
Since the vector field is piecewise-constant, the flow is piecewise-linear, as the inset illustrates.
\setlength{\columnsep}{5pt}
\begin{wrapfigure}[3]{r}{0.2\columnwidth}
   \centering
   \vspace{-14pt}
   \includegraphics[width=\linewidth]{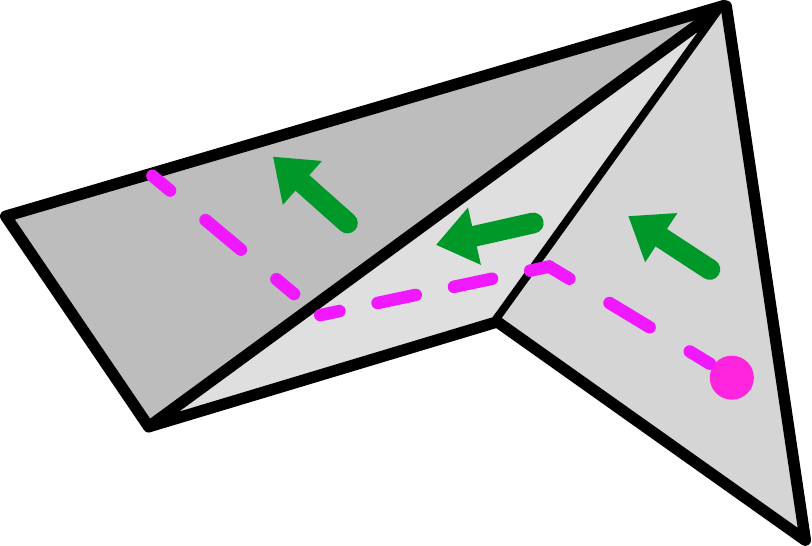}
\end{wrapfigure}
The question of how to well-define the flow when the point crosses over between triangles (over an edge or a vertex) is equivalent to ensuring the flow is injective. Figure~\ref{fig:flux} shows examples of flows, where the purple point is flowed to its end position marked by a red $X$.

 To tackle injectivity, we will show that a harmonic function's gradient field satisfies the combinatorial rules for injectivity from~\cite{campen2016}. We next review these rules, and  reformulate them into a formulation more suited to our needs, using  the notion of edge/vertex fluxes, illustrated in Figure~\ref{fig:flux}.

\subsection{Fluxes }
\begin{figure}[t]
     \centering
     \begin{subfigure}[b]{0.32\columnwidth}
         \includegraphics[width=\textwidth]{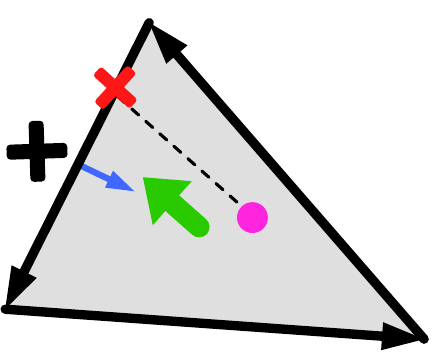}
         \captionsetup{justification=centering}
         \caption*{\textbf{Positive} edge flux}
     \end{subfigure}
     \begin{subfigure}[b]{0.32\columnwidth}
         \includegraphics[width=\textwidth]{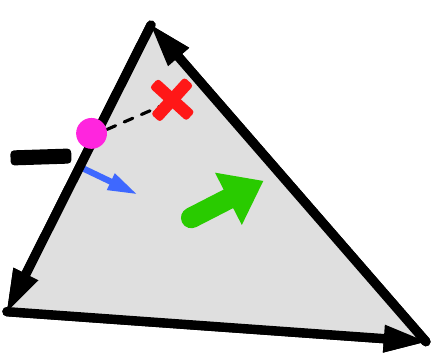}
         \captionsetup{justification=centering}
         \caption*{\textbf{Negative} edge flux}
     \end{subfigure}\\
     \begin{subfigure}[b]{0.32\columnwidth}
         \includegraphics[width=\textwidth]{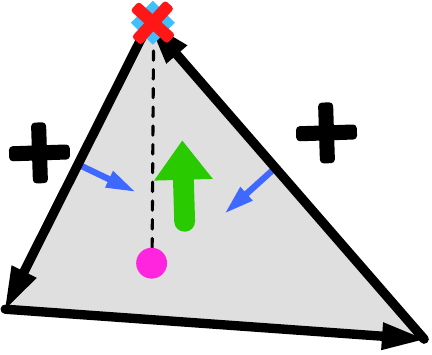}
         \captionsetup{justification=centering}
         \caption*{\textbf{Positive} vertex flux\\\textbf{Even} parity ($\textbf{+}=\textbf{+}$)}
     \end{subfigure}
     \hfill 
     \begin{subfigure}[b]{0.32\columnwidth}
         \includegraphics[width=\textwidth]{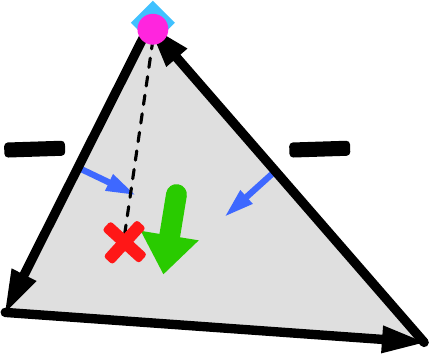}
         \captionsetup{justification=centering}
         \caption*{\textbf{Negative} vertex flux\\\textbf{Even} parity ($\textbf{--}=\textbf{--}$)}
     \end{subfigure}
     \hfill
     \begin{subfigure}[b]{0.32\columnwidth}
         \includegraphics[width=\textwidth]{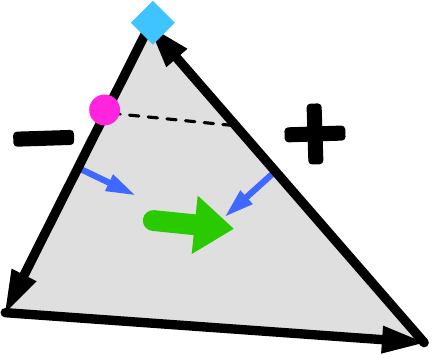}
         \captionsetup{justification=centering}
         \caption*{\textbf{Zero} vertex flux\\\textbf{Odd} parity ($\textbf{--}\neq\textbf{+}$)}
     \end{subfigure}
     
    \caption{\textbf{Types of fluxes on edges and vertices.} The three different types of vertex fluxes, and the parity of each one, along with the two types of edge fluxes. Vector field in green, edge normals in blue, flowed point in purple, its trajectory as a dashed line, and its final position as red X. Edge flux sign denoted with +/- next to the edge.}
    \label{fig:flux}
\end{figure}
We begin by discussing fluxes over \textit{edges} (which will be subsequently used to also define fluxes over \textit{vertices}). We say the half-edge $\e$ has \emph{positive flux} if there exists a point inside its corresponding triangle, $p\in\tri$, such that the flow defined by $\vv_\tri$ will flow this point to lie on the edge $\e$.  Inversely, we say the vector has \textit{negative} flux with respect to the edge $\e$ if there is a point $p$ on the edge $\e$ which would flow into the triangle.

  We denote the edge flux on the half-edge $\e$ as $\flux_\e\in\set{+1,-1}$. Geometrically, it corresponds to the inverted sign of the dot product of the vector with the edge normal: 
  \vspace{5pt}
  \begin{definition}[Edge flux]
\label{def:edge_flux}
The edge flux $\flux_\e$ on edge $\e$ is defined as
\begin{equation}
\flux_\e=-\sign{\tribrac{\vv_\tri,\n_\e}}.
\end{equation}
\end{definition}
See Figure~\ref{fig:flux}, top row, where a $+$/$-$ sign denotes the flux over the corresponding half-edge, the purple point is the starting point $p$ and the red X marks the final point.
Note that we assume we do not encounter the case of a vector field parallel to the edge (no point is flowed into or out of that edge, and the inner product is zero).

We now move on to discuss fluxes on \emph{vertices}. Consider a triangle $\tri$, the corresponding vector on the triangle $\vv_\tri$, and one of the  vertices $v$ of that triangle. We say the vector has \emph{positive flux} with respect to the vertex $v$ if there exists a point on the triangle, $p\in\tri$, such that the flow defined by $\vv_\tri$ will flow this point to lie on the vertex $v$.  We say the vector has \textit{negative} flux with respect to the vertex $v$ if a point starting from the vertex flows  into the triangle. Finally, we say there is \emph{no flux} in all other cases, which essentially correspond to the vertex neither flowing inside nor outside of this triangle.

To formalize these conditions geometrically: note that  the vertex lies on the intersection of two of the triangle's edges, $e_1,e_2$. Therefore, for a point $p$ to flow to the vertex, it must flow into both edges, hence the vector $\vv_\tri$ must have positive edge flux into \emph{both} edges sharing that vertex. Similarly, for the vertex to flow into the triangle, the vector $\vv_\tri$ must have negative flux into both edges sharing the vertex. In the remaining case (no vertex flux), the vector has positive flux on one edge, and negative flux on the other. See Figure~\ref{fig:flux}, bottom for an illustration of these cases.
\vspace{5pt}
\begin{definition}[Vertex flux]
\label{def:vertex_flux}
The flux on vertex $v$ with respect to triangle $\tri$ is  defined via the edge fluxes as:
\begin{equation}
\label{eq:vertex_flux}
    \flux^\tri_{v} = 
\begin{cases} 
+1 & \text{if }\flux_{(v,i)} = +1 \text{ and } \flux_{(j,v)} =+ 1 \\
-1 & \text{if }\flux_{(v,i)} = -1 \text{ and } \flux_{(j,v)} =-1 \\
\ \ \ 0 &   \text{if }\flux_{(v,i)} = -\flux_{(j,v)} 
\end{cases}
\end{equation}
\end{definition}

\subsection{Injectivity Conditions, Defined via Fluxes}
\label{ss:injective_flux}
\begin{figure}[t]
     \centering
     \begin{subfigure}[b]{0.32\columnwidth}
         \includegraphics[width=\textwidth]{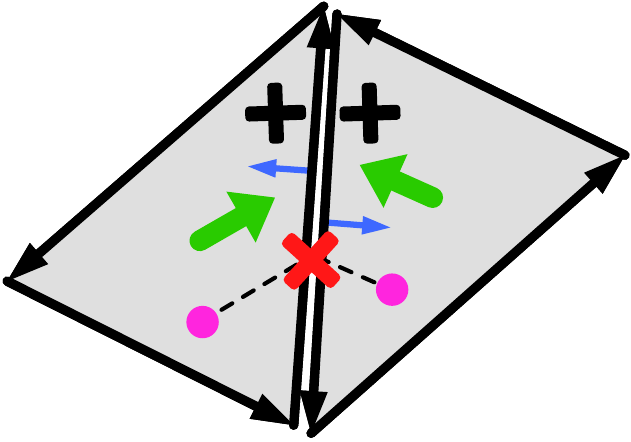}
         \captionsetup{justification=centering}
         \caption*{Non-injective edge flux}
     \end{subfigure}
     \begin{subfigure}[b]{0.32\columnwidth}
         \includegraphics[width=\textwidth]{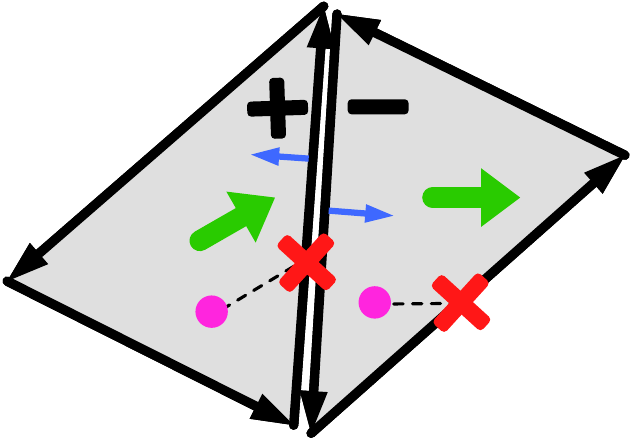}
         \captionsetup{justification=centering}
         \caption*{Injective edge flux}
     \end{subfigure}
     \hfill 
     \begin{subfigure}[b]{0.32\columnwidth}
     \includegraphics[width=\linewidth]{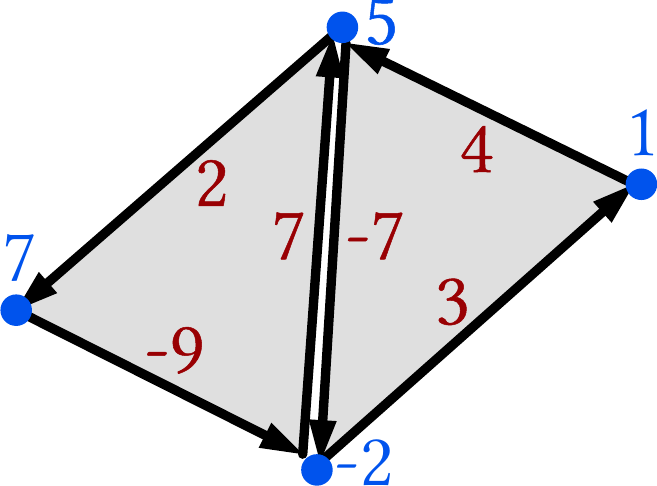}
     \caption*{{\color[RGB]{0, 83, 237}Function $\z$} and {\color[RGB]{153, 0, 3} 1-form $\delta$}}
     \end{subfigure}
     \caption{\textbf{Injectivity of edge fluxes.} Injectivity dictates that no two points are advected to the same location on the edge. This  is equivalent to requiring opposite flux signs between the half-edge and its twin. The 1-form $\delta$ exactly satisfies this antisymmetric relation. }
     \label{fig:edge_inj}
\end{figure}
Any vector field well-defines a flow in the \emph{interior} of the triangle, but not necessarily on the \emph{boundary} of the triangle: two points on two triangles may flow into the same point either on an edge or on a vertex. This is equivalent to non-injectivity. \cite{campen2016} define specific conditions that ensure injectivity, which we review next while reformulating them for our needs.


First, it is immediate to observe (e.g., via Figure~\ref{fig:edge_inj}, left and center) that the vector field is injective over the half-edges $\e$  and $\twine$ iff $\e$ has flux (per Definition~\ref{def:edge_flux}) with a sign opposite to that of $\twine$, 
\begin{equation}
\label{eq:edge_inj}
\flux_\e = - \flux_{\twine}
\end{equation}
The flow of a point $p$ is then defined  by the vector with the negative flux, which flows  points away from the edge. \vspace{5pt}
\begin{property}[edge injectivity]
The vector field is injective over a half-edge $\e$, if its flux over that half-edge satisfies Equation~\eqref{eq:edge_inj}.
\label{prop:edge_inj}
\end{property}

We move on to vertex injectivity. We classify the injectivity on a vertex into three cases:

\begin{enumerate}
    \item If the point $p$ lies on a \textit{vertex} and is flowed somewhere, for the flow to be well-defined (and injective) there must be exactly one triangle with a vector that has negative flux to the vertex, and one triangle that has positive flux from the vertex. All other triangles have no flux with respect to the vertex. This is the case of a \textbf{regular injective} vertex (see Figure~\ref{fig:vertex_inj}, top left). The vector that has outgoing flow with respect to the vertex is chosen to define the flow. 

\item If the vertex is not flowed and stays in place, then all triangles in the 1-ring have zero vertex flux, implying the flow rotates around the vertex (Figure~\ref{fig:vertex_inj}, bottom left). This is a \textbf{vortex injective} vertex.

\item All other cases are \textbf{non-injective} vertices.
\end{enumerate}

\vspace{5pt}
\begin{property}[Vertex injectivity]
    \label{prop:vert_inj}
   The vector field is injective over a vertex if its flux on that vertex is either regular injective or vortex injective, per the classification above.
\end{property}

Our goal is thus to characterize a family of vector fields that satisfy the two above  properties for injectivity.
\begin{algorithm}[t]        
\SetKwInOut{Input}{Input}
    \SetKwInOut{Output}{Output}
    \caption{Computing an injective vector field.}
    \label{alg:compute}
    \Input{Sphere-topology mesh $\mesh$; choice of source/sink vertices $s,t$; parameterized Laplacian $\lap$. }
	\Output{Vector field $\vv$.}
    Solve Equation~\eqref{eq:harmonic} with respect to $\lap,s,t$ to obtain $\z$.

    Compute the per-triangle gradient field  $\nabla_\tri \z $.
    
    \Return the vector field $\vv$ defined via $\vv_\tri  = \textbf{R}_\tri^{90^\circ}\nabla_\tri \z$.
\end{algorithm}

\section{Injective Flows via Harmonic Functions}

Towards producing an injective vector field and proving its injectivity, this section progresses in three steps:
\begin{enumerate}
    \item (Section~\ref{ss:harmonic}) Compute a harmonic function $\z$.

\item (Section~\ref{ss:vector_field}) Define a vector field $\vv$ as the $90^\circ$-rotated gradient of the harmonic function, and show that its flux $\flux$ is exactly equal to a 1-form $\delta$ defined by $\z$.

\item (Section~\ref{ss:injectivity_proof}) Use the 1-form's properties to show it satisfies the necessary injectivity properties from Section~\ref{ss:injective_flux}, thereby showing that $\flux$ satisfies them as well. 
\end{enumerate}
\subsection{Computing a Discrete Harmonic 1-Form.}
\label{ss:harmonic}

We define a discrete Laplace equation on the mesh: we choose two vertices to serve as source/target, designated $s,t$. We also assume to have a Laplacian $\lap$, with any desired positive weights $w_{ij}$ on each edge (e.g., these can either be positively-clamped cotan weights~\cite{pinkall1993}, or ones that are optimized). We  solve the discrete Laplace equation with boundary conditions defined by $s,t$ and using the Laplacian $\lap$:
\begin{equation}
\label{eq:harmonic}
\begin{array}{ll}
\z_i = 1 & i =s   \\
\z_i = -1 & i =t   \\
\parr{\lap \z}_i = 0  & \text{else}
\end{array}
\end{equation}
Similar to~\cite{gortler2006}, for any half-edge $\e=(i,j)$, we define  the 1-form 
\begin{equation}
\label{eq:1form}
    \delta_\e = {\z_j - \z_i},
\end{equation} 
assigning a real number to that half-edge. See Figure~\ref{fig:edge_inj}, right, for an example of such $\z$ and the resulting 1-form $\delta$. Note we assume $\z$ is such that the 1-form is non-vanishing\setlength{\columnsep}{5pt}
 on any edge ($\delta_\e\neq0$); otherwise, similarly to the  argument made in Appendix A in~\cite{gortler2006}, we can perturb the 1-form by an infinitesimal amount to make it non-vanishing, without changing any of the assumptions necessary for the proof.

\subsection{Constructing an Injective Vector Field from the Harmonic Function}
\label{ss:vector_field}
We define our vector field on each triangle $\tri$ as the gradient vector rotated by $90^\circ$:
\begin{equation}
\label{eq:our_vector_field}
    \vv_\tri = \textbf{R}^{90^\circ}\nabla_\tri \z,
\end{equation}
which leads to the main theoretical result of this paper:
\vspace{5pt}
\begin{theorem}
\label{thm:injectivity}
    The flow induced by the vector field $\vv$, defined in Equation~\eqref{eq:our_vector_field}, is injective.
\end{theorem}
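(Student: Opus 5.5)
The plan is to reduce Theorem~\ref{thm:injectivity} to Properties~\ref{prop:edge_inj} and~\ref{prop:vert_inj}. I would do this by expressing every edge and vertex flux of $\vv$ purely in terms of the signs of the 1-form $\delta$ of Equation~\eqref{eq:1form}, and then using the index theory of discrete harmonic 1-forms from~\cite{gortler2006}. Throughout I assume $\delta_\e\neq 0$ on every edge, justified by the perturbation argument already mentioned.

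\textbf{Step 1 (flux equals the sign of $\delta$).} In a triangle $\tri$ containing the half-edge $\e=(i,j)$, I would compute
\begin{equation*}
\tribrac{\vv_\tri,\n^\tri_\e} = -\tribrac{\textbf{R}^{90^\circ}_\tri\nabla_\tri\z,\ \textbf{R}^{90^\circ}_\tri(v_j-v_i)}/\norm{v_j-v_i} = -\tribrac{\nabla_\tri\z,\ v_j-v_i}/\norm{v_j-v_i}.
\end{equation*}
This uses only that the rotation is an isometry of the tangent plane. Since $\z$ is affine on $\tri$, we have $\tribrac{\nabla_\tri\z, v_j-v_i}=\z_j-\z_i=\delta_\e$. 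Hence $\flux_\e=\sign{\delta_\e}$. Because $\delta_{\twine}=-\delta_\e$, Equation~\eqref{eq:edge_inj} follows immediately, which gives edge injectivity (Property~\ref{prop:edge_inj}).

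\textbf{Step 2 (vertex flux via sign changes).} Consider a vertex $v$ and a triangle $(v,i,j)$ of its 1-ring. By Step 1, $\flux_{(v,i)}=\sign{\z_i-\z_v}$ and $\flux_{(j,v)}=-\sign{\z_j-\z_v}$. By Definition~\ref{def:vertex_flux}, the vertex flux is nonzero exactly when $\z_i-\z_v$ and $\z_j-\z_v$ have opposite signs. Equivalently, the wedge between the consecutive neighbors $j,i$ witnesses a sign change of $\delta$ around $v$. Moreover, the value of the flux is determined by the direction of the change: it is $+1$ when $j$ is lower and $i$ is higher, and $-1$ in the reverse case. So the number of triangles with nonzero vertex flux equals the number $sc(v)$ of sign changes of $\delta$ around $v$. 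Since the signs are cyclic, consecutive changes alternate between ``lower$\to$higher'' and ``higher$\to$lower''. Therefore, if $sc(v)=2$, there is exactly one triangle with flux $+1$ and one with flux $-1$, i.e., $v$ is regular injective. If $sc(v)=0$, all vertex fluxes vanish, i.e., $v$ is vortex injective.

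\textbf{Step 3 (index count; the main step).} It remains to show that $sc(v)=2$ for every $v\neq s,t$ and $sc(s)=sc(t)=0$. I would use the discrete Poincaré--Hopf theorem of~\cite{gortler2006}: with $\ind{v}=1-sc(v)/2$, the indices of all vertices (faces contribute nothing for an exact form) sum to the Euler characteristic $2$ of the sphere.
\begin{itemize}
\item At $s$, the boundary condition together with the maximum principle (positive weights $w_{ij}$) places $s$ at the global maximum, so every neighbor is lower. Hence $sc(s)=0$ and $\ind{s}=1$; symmetrically $\ind{t}=1$.
\item At any other vertex, $(\lap\z)_v=0$ with positive weights makes $\z_v$ a strict convex combination of its neighbors' values. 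With nonvanishing $\delta$, this forces at least one higher and one lower neighbor, so $sc(v)\ge 2$ and $\ind{v}\le 0$.
\end{itemize}
Since the indices sum to $2$ and $s,t$ already contribute $2$, every other index must be exactly $0$, i.e., $sc(v)=2$. Combined with Step 2, this gives Property~\ref{prop:vert_inj} at all vertices and proves the theorem.

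I expect the main obstacle to be the rigor of this index step. One must check that the index convention of~\cite{gortler2006} matches the count in Step 2, and that the infinitesimal perturbation used to remove zero $\delta_\e$ preserves both harmonicity-type sign constraints (at least one higher and one lower neighbor) and the extremality of $s$ and $t$. The first thing I would try is to carry out the perturbation on $\z$ itself, in the style of Appendix A of~\cite{gortler2006}, keeping $s$ and $t$ as strict extrema.
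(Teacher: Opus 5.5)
Your proposal is correct and follows essentially the same route as the paper: you show the flux equals $\sign{\delta_\e}$ via rotation invariance, get edge injectivity from antisymmetry, match vertex fluxes to sign changes of $\delta$ around the 1-ring, and then combine the Gortler et al.\ Poincaré--Hopf sum with the maximum principle to force index $0$ at every non-terminal vertex. The differences are minor: you replace the paper's parity/induction lemma with the observation that sign changes around a cycle alternate in direction, and you get $\ind{s}=\ind{t}=1$ directly from the global maximum principle rather than deducing it from the index sum (the paper's route needs only the trivial bound $\ind{v}\le 1$).
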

The following Section~\ref{ss:injectivity_proof} proves the above theorem. Similarly to Tutte's embedding~\cite{tutte1963}, injectivity holds for a very specific setting: a sphere-topology mesh, with one source and one sink -- see Figure~\ref{fig:non-injective-isolines} for a non-injective example when attempting the same technique on a mesh of a different genus.

\begin{wrapfigure}[11]{r}{0.4\columnwidth}
   \vspace{-14pt}
\includegraphics[width=\linewidth]{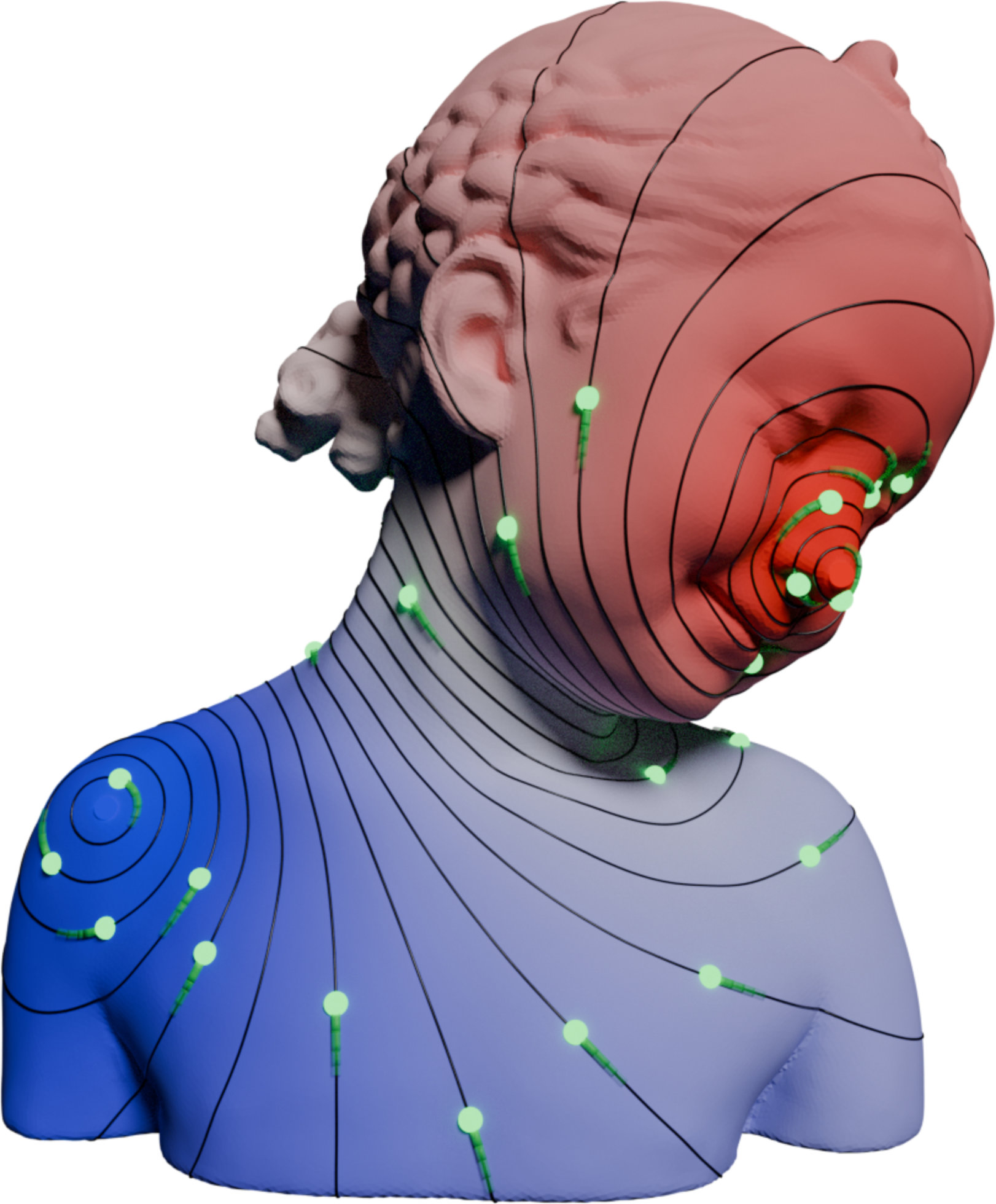}
\end{wrapfigure}
Geometrically, the rotated gradient vector field points in a direction tangent to a level set of $\z$; thus, the resulting injective flow defines a trajectory that marches along a fixed level set of $\z$, in a direction designated by the orientation of the mesh. The source and sink vertices $s,t$ serve as two vortex points (the flow rotates around them and they are stationary). Note, however, that when using Laplacians with random weights (Figure~\ref{fig:rand_lap},~\ref{fig:accuracy}) or optimized ones (Figures~\ref{fig:learn vf},~\ref{fig:optimize_sparse}), the level sets become more intricate and enable different flows.

    


    

    
         
        
        
    


\subsection{Injectivity of the Flow (Proof of Theorem~\ref{thm:injectivity})}
\label{ss:injectivity_proof}
We aim to provide a streamlined proof that highlights the key insights leading to the main result; as such, we prove all the necessary lemmas in Appendix~\ref{app:lemma}.  
To prove injectivity of the vector field, we need to prove that the injective properties  regarding its \emph{flux} $\flux$ (Section~\ref{ss:injective_flux}) hold. The key connection between the harmonic function $\z$ and the flux $\flux$ is that on each edge $\e$, the flux $\flux_\e$ is exactly equal to the sign of the 1-form $\delta_\e$.  

\vspace{5pt}
\begin{lemma}[flux/1-form equivalence]
\label{lem:flux_1form_equiv}
    The edge flux of the vector field $u$ defined in Equation~\eqref{eq:our_vector_field}  is equal on all edges to the 1-form defined in Equation~\eqref{eq:1form}:
    \begin{equation}
        \flux_\e = \sign{\delta_\e}.
    \end{equation}
\end{lemma}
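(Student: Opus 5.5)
The plan is to compute the inner product $\tribrac{\vv_\tri,\n_\e}$ in closed form and read off its sign. Two ingredients suffice: the rotation $\textbf{R}^{90^\circ}_\tri$ is an isometry of the tangent plane of $\tri$, and the linear finite element gradient reproduces the differences of $\z$ along the triangle's edges. No combinatorial or topological input is needed; the lemma is purely local to a single triangle. Throughout, I will assume that the rotation used in Equation~\eqref{eq:our_vector_field} is the same $\textbf{R}^{90^\circ}_\tri$ (same orientation of the tangent plane of $\tri$) as the one in the definition of the edge normal.

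Fix a triangle $\tri=(i,j,k)$ and its half-edge $\e=(i,j)$, and write $g=\nabla_\tri\z$ and $d=v_j-v_i$. Both $g$ and $d$ lie in the tangent plane of $\tri$. By the definition of the edge normal and of $\vv_\tri$,
\begin{equation*}
\tribrac{\vv_\tri,\n_\e} = -\frac{1}{\norm{d}}\tribrac{\textbf{R}^{90^\circ}_\tri g,\ \textbf{R}^{90^\circ}_\tri d} = -\frac{1}{\norm{d}}\tribrac{g,d},
\end{equation*}
where the second equality uses that a rotation within the plane preserves inner products.

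Next I use the defining property of the piecewise-linear gradient. The affine interpolant $f$ of $\z$ on $\tri$ satisfies $f(v_i)=z_i$ and $f(v_j)=z_j$. Its differential is $\tribrac{g,\cdot}$ on the tangent plane, so $\tribrac{g,d}=f(v_j)-f(v_i)=z_j-z_i=\delta_\e$. Substituting this into Definition~\ref{def:edge_flux} gives
\begin{equation*}
\flux_\e=-\sign{\tribrac{\vv_\tri,\n_\e}}=-\sign{-\delta_\e/\norm{d}}=\sign{\delta_\e},
\end{equation*}
since $\norm{d}>0$. The standing assumption $\delta_\e\neq 0$ guarantees that this inner product never vanishes, so the flux is well defined (the field is never parallel to an edge).

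The only step requiring care, and the one I expect to be the main obstacle, is the bookkeeping of orientations and signs. One must check that the rotation applied to the gradient and the one used to build $\n_\e$ rotate in the same sense within $\tri$. This holds because both are defined by the triangle's orientation, which is induced consistently by the oriented sphere. One must also check that the minus signs in the edge normal and in the flux definition cancel as claimed. As a sanity check, I would verify the formula on a single explicit planar triangle with $\z$ increasing from $v_i$ to $v_j$, confirming that the rotated gradient indeed points toward the edge (positive flux).
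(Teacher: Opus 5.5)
Your proposal is correct and follows essentially the same argument as the paper. Both rely on the shared $90^\circ$ rotation preserving inner products, which reduces $\tribrac{\vv_\tri,\n_\e}$ to $-\tribrac{\nabla_\tri\z,v_j-v_i}/\norm{v_j-v_i}$, and on the linear-FEM gradient reproducing the edge difference $\z_j-\z_i=\delta_\e$ (the paper phrases this as the directional derivative along the edge); your explicit caveat that the same rotation sense must be used in Equation~\eqref{eq:our_vector_field} and in the edge normal is an assumption the paper makes only implicitly.
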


 
We use this equivalence to prove  edge/vertex flux injectivity.

To prove \emph{edge injectivity}, we need to prove Property~\ref{prop:edge_inj} in Section~\ref{ss:injective_flux}: $\flux_{\e} = -\flux_{\twine}$. Per Lemma~\ref{lem:flux_1form_equiv}, this is equivalent to proving $\delta_{\e}= - 
\delta_{\twine}$. This last equation is immediate to deduce from 
Equation~\eqref{eq:1form} which implies the anti-symmetry of  $\delta$ (see also the half-edges shared between the two triangles in Figure~\ref{fig:edge_inj}, right).  Hence, we assume the vector field is edge-injective and turn to proving \emph{vertex injectivity}:



\vspace{5pt}
\begin{theorem}[Vertex injectivity]
\label{thm:vert_inj}
    Let $\mesh$, $\z$ be as above. Then the vector field defined by Equation~\eqref{eq:our_vector_field} is vertex-injective as defined in Property~\ref{prop:vert_inj}, Section~\ref{ss:injective_flux}.
\end{theorem}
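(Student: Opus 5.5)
Using Lemma~\ref{lem:flux_1form_equiv}, we translate vertex injectivity into a statement about the signs of $\delta$ around each vertex $v$, and then invoke the index characterization of Gortler et al.~\cite{gortler2006}.

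\textbf{Translating vertex flux into sign changes of $\delta$.} Fix a triangle $\tri=(v,i,j)$ in the 1-ring of $v$. Its two half-edges at $v$ are $(v,i)$ and $(j,v)$. By Lemma~\ref{lem:flux_1form_equiv}, $\flux_{(v,i)}=\sign{\z_i-\z_v}$ and $\flux_{(j,v)}=\sign{\z_v-\z_j}$. By Definition~\ref{def:vertex_flux}, the vertex flux $\flux^\tri_v$ is therefore zero exactly when $\z_i-\z_v$ and $\z_j-\z_v$ have the \emph{same} sign. It is $\pm1$ exactly when they have opposite signs, i.e., when the sign of $\z_k-\z_v$ changes as $k$ moves from $i$ to $j$ around the cyclically ordered 1-ring. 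The sign $+1$ corresponds to a change from $+$ to $-$, and $-1$ to a change from $-$ to $+$. Going around the ring, sign changes must alternate between these two types. Hence the number of triangles with flux $+1$ equals the number with flux $-1$, and each equals half the number of sign changes $sc(v)$ of $\delta$ on the outgoing edges of $v$. We will double-check the orientation bookkeeping here, but only the pairing into equal numbers of $\pm1$ matters. Vertex injectivity (Property~\ref{prop:vert_inj}) then becomes: $sc(v)=2$ (regular injective) or $sc(v)=0$ (vortex injective) for every vertex $v$.

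\textbf{Bounding sign changes via harmonicity and the index theorem.} This is the content of the planned Lemma~\ref{lem:us_gortler_equiv}. Following~\cite{gortler2006}, define the index $\ind{v}=1-sc(v)/2$, and the index of a face as $1-sc(f)/2$, where $sc(f)\in\{0,2\}$ always holds for a triangle. Since $sc(f)$ is even and a triangle cannot have all three boundary signs of an exact form equal (they sum to zero), $sc(f)=2$ and every face has index $0$. The discrete Poincaré--Hopf theorem of~\cite{gortler2006} gives $\sum_v \ind{v}+\sum_f\ind{f}=\chi=2$, so $\sum_v \ind{v}=2$. Now I use harmonicity at each interior vertex $v\neq s,t$. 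Since $(\lap\z)_v=0$ with positive weights, $\z_v$ is a strict weighted average of its neighbors. Given the nonvanishing assumption, some neighbor is larger and some smaller, so $sc(v)\ge 2$ and $\ind{v}\le 0$. At $s$ and $t$, the maximum principle gives $\z_s=1\ge \z_k$ and $\z_t=-1\le\z_k$ for all neighbors $k$. After perturbation all signs are strict, so $sc=0$ and the index is $1$ at each. Therefore $\sum_{v\neq s,t}\ind{v}=2-2=0$ with every term nonpositive, which forces $\ind{v}=0$, i.e. $sc(v)=2$, at every other vertex.

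\textbf{Conclusion and main obstacle.} It follows that $s,t$ are vortex injective and all other vertices are regular injective, which is Property~\ref{prop:vert_inj}. The main obstacle is the first step: matching Definition~\ref{def:vertex_flux} and the half-edge orientation conventions precisely to sign changes of $\delta$ around the ring, so that the two notions coincide exactly. A second concern is ensuring the perturbation of~\cite{gortler2006} preserves the maximum principle at $s,t$. I would handle this by perturbing only the interior values, since $\z_s$ and $\z_t$ are the strict extremes whenever the mesh is connected.
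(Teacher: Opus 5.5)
Your proposal is correct and follows essentially the paper's route. Lemma~\ref{lem:flux_1form_equiv} turns vertex fluxes into sign changes of $\delta$ around the 1-ring; your observation that $+1$/$-1$ vertex fluxes are exactly the alternating $+\to-$/$-\to+$ sign changes is a more compact replacement for the paper's parity argument and induction in Lemma~\ref{lemma:v_inj}. The index theorem of~\cite{gortler2006} plus the local maximum principle then forces index $0$ at every vertex other than $s,t$. One simplification: you do not need the global maximum principle at $s,t$, nor any care about how the perturbation affects it. Every index is at most $1$, indices at harmonic vertices are at most $0$, and the total is $2$, so $\ind{s}=\ind{t}=1$ follows automatically, which is exactly how the paper concludes.
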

To prove this theorem, we draw a connection between the flux conditions of Campen et al.~\cite{campen2016} and the behavior of indices of harmonic 1-forms described in~ Gortler et al.~\cite{gortler2006}.

\begin{proof}[Proof of Theorem~\ref{thm:vert_inj}]
We turn to~\cite{gortler2006}. We consider their Theorem 3.5 (essentially a discrete Poincaré-Hopf theorem), and interpret it in our scenario. They define an \textit{index} $\text{ind}(v_o)$ assigning an  integer for each vertex $v_o$. The index of the vertex $v_o$ is defined by stepping over its 1-ring edges in order, each oriented in outgoing direction $(o,1),(o,2)...,(o,k)$, and checking how many times the 1-form $\delta_{o,j}$ changes signs, with the index, $\ind{v_o}$, being equal to $1$, $0$ or $-1$ based on zero, two, or more than two sign changes, respectively (see Figure~\ref{fig:vertex_inj}, right column).

The proof hinges on the following Lemma~\ref{lem:us_gortler_equiv}, which shows  an equivalence between the aforementioned index classification and our version (Section~\ref{ss:injective_flux},  Property~\ref{prop:vert_inj}) of the classification from Campen et al.~\cite{campen2016} of  injective and non-injective vertices. Figure~\ref{fig:vertex_inj} portrays this equivalence between the left and right images in each row.
\begin{lemma}
\label{lem:us_gortler_equiv}
The vertex $v$ has index $1$ iff it is a \textit{regular} injective vertex, an index $0$ iff it is a \textit{vortex} injective vertex, and index $-1$ iff it is a \textit{non-injective} vertex, per the definition in Section~\ref{ss:injective_flux}, Property~\ref{prop:vert_inj}. 
\end{lemma}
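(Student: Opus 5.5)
The plan is to reduce everything to a single local count around the vertex. Let $v$ have ordered 1-ring neighbours $1,\dots,k$ (cyclically, following the mesh orientation), and let the consecutive triangles be $\tri_j=(v,j,j+1)$ or $(v,j+1,j)$, whichever matches the orientation. In Definition~\ref{def:vertex_flux}, the two half-edges of $\tri_j$ incident to $v$ are the outgoing one, $(v,\cdot)$, and the incoming one, $(\cdot,v)$. By Lemma~\ref{lem:flux_1form_equiv} and the antisymmetry $\delta_{\e}=-\delta_{\twine}$, their fluxes are $\sign{\delta_{v,a}}$ and $-\sign{\delta_{v,b}}$, where $a,b$ are the two neighbours of $v$ in $\tri_j$. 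Hence $\flux^{\tri_j}_v$ is nonzero exactly when $\delta_{v,a}$ and $\delta_{v,b}$ have opposite signs, i.e.\ exactly when the cyclic sequence $\delta_{v,1},\dots,\delta_{v,k}$ changes sign between these two consecutive neighbours. Its value is $+1$ or $-1$ according to the direction of that sign change, $+\to-$ versus $-\to+$ (up to a fixed convention given by orientation).

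The key consequence is that the number of triangles with nonzero vertex flux equals the number of sign changes. Moreover, because sign changes around a cycle alternate between $+\to-$ and $-\to+$, the nonzero vertex fluxes alternate between $+1$ and $-1$. This yields three cases. With zero sign changes, every triangle has zero vertex flux, which is the vortex configuration. With exactly two sign changes, exactly one triangle has flux $+1$ and one has $-1$, which is the regular configuration. With more than two sign changes (necessarily an even number $\ge 4$), at least two triangles carry positive flux, so the vertex is non-injective. These implications reverse as well, since the three count classes partition all vertices and so do the three injectivity classes. The non-vanishing assumption $\delta_\e\neq 0$ guarantees that sign changes are well defined.

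The final step attaches these count classes to the index values $1,0,-1$ using the index convention recalled above from Gortler et al.~\cite{gortler2006}, which assigns the index by the number of sign changes. I expect this bookkeeping to be the main obstacle. The sign-change computation above shows directly that \emph{zero} sign changes correspond to vortex vertices and \emph{two} sign changes to regular vertices. The pairing of index $1$ with regular and index $0$ with vortex, as the lemma is worded, therefore only holds if the index convention in use assigns value $1$ to the two-sign-change class and $0$ to the zero-sign-change class.

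That is the opposite of the convention as recalled in the text, $\ind{v}=1-\tfrac{\#\text{sign changes}}{2}$, which gives index $1$ for zero changes. So I would first pin down exactly which normalisation of the index (and which form, $\delta$ or its rotated dual) is meant. Under the recalled convention, the computation yields index $1$ iff vortex and index $0$ iff regular, with index $-1$ iff non-injective unaffected. I would verify this against the source vertex $s$: it is a strict extremum of $\z$, so it has zero sign changes and, being stationary with the flow circling it, is a vortex point, consistent with the earlier description of $s,t$ as vortices.

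The non-injective case, $-1$ iff more than two changes, holds under either convention. For Theorem~\ref{thm:vert_inj}, what matters is only that index $-1$ corresponds to non-injective vertices, so whichever labeling is adopted for the other two classes, the classification suffices for the downstream argument.
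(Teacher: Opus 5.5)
Your proof is correct and takes essentially the paper's approach: via Lemma~\ref{lem:flux_1form_equiv} and the antisymmetry of $\delta$, both identify the triangles with nonzero (even-parity) vertex flux with the sign changes of $\delta$ around the 1-ring, and your observation that sign changes alternate in direction is a compact substitute for the paper's inductive walk in Lemma~\ref{lemma:v_inj}. You are also right about the labels: the paper's own proof concludes index $1$ iff vortex and index $0$ iff regular, consistent with Figure~\ref{fig:vertex_inj} and with $s,t$ being called vortex points while having index $1$ by Lemma~\ref{lem:non_neg_index}, so the statement as printed swaps the two, which is harmless downstream since only the correspondence between non-injective vertices and negative index is used.
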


Hence, to ensure all vertices are injective, all we need to show is that there are no vertices with $\text{ind}(v) = -1$. We prove a slightly stronger version of this claim using standard arguments~\cite{gortler2006}:

\begin{lemma}
\label{lem:non_neg_index}
    Let $\mesh$, $\z$ and $\delta$ be as described above. Then all vertices have index $0$, except for the source and sink vertices $s,t$, which have index 1.
\end{lemma}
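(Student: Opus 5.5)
The plan is to combine a local maximum-principle argument, which bounds every index from above, with the global discrete Poincaré–Hopf identity of Gortler et al.~\cite{gortler2006}, which pins down their sum. The index as defined above is $\ind{v}=1-\tfrac{c(v)}{2}$, where $c(v)$ is the (necessarily even) number of sign changes of $\delta$ around the 1-ring of $v$. So it suffices to show three things: $c(s)=c(t)=0$, $c(v)\ge 2$ for every other vertex, and the indices sum to $2$.

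\textbf{Step 1 (interior vertices have index $\le 0$).} Take $v_i\neq s,t$. By Equation~\eqref{eq:harmonic}, $(\lap\z)_i=0$ with positive weights $w_{ij}$, so $\sum_j w_{ij}(\z_j-\z_i)=0$, i.e., $\sum_j w_{ij}\delta_{(i,j)}=0$. Since every $\delta_{(i,j)}\neq 0$ by the non-vanishing assumption, the outgoing values $\delta_{(i,j)}$ cannot all have the same sign. Hence at least one positive and one negative value occur, and going cyclically around the 1-ring there are at least two sign changes. This gives $\ind{v_i}\le 0$.

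\textbf{Step 2 (source and sink have index $1$).} Every interior value is a positive weighted average of its neighbours, so by the discrete maximum principle all values satisfy $-1\le \z_i\le 1$. Consequently every neighbour $j$ of $s$ satisfies $\z_j\le 1=\z_s$, and non-vanishing makes this strict, so $\delta_{(s,j)}<0$ for all $j$. There are no sign changes around $s$, hence $\ind{s}=1$; symmetrically $\delta_{(t,j)}>0$ for all $j$, so $\ind{t}=1$.

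\textbf{Step 3 (global count).} I would invoke Theorem 3.5 of~\cite{gortler2006}: for a non-vanishing 1-form on a closed surface, the sum of vertex indices plus face indices equals $\chi=2$ for the sphere. The face index is defined analogously, via sign changes of $\delta$ along the three oriented half-edges of a triangle. Because $\delta$ is exact, $\delta_{(i,j)}+\delta_{(j,k)}+\delta_{(k,i)}=0$, so the three values are nonzero but not all of one sign. Cyclically this gives exactly two sign changes, so every face has index $0$. Hence $\sum_v \ind{v}=2$. By Step 2 the source and sink contribute exactly $2$, so $\sum_{v\neq s,t}\ind{v}=0$. Each of these terms is $\le 0$ by Step 1, so all of them vanish.

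The main obstacle I expect is Step 3: matching conventions with~\cite{gortler2006}. One must check that their face index is indeed $0$ for exact (closed) forms. One must also check that their Theorem 3.5 applies verbatim to our half-edge orientation and sign-change convention, including the perturbation argument used when $\delta$ vanishes on some edge. Steps 1 and 2 are routine maximum-principle arguments.
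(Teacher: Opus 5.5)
Your proposal is correct and follows essentially the same route as the paper: the Poincar\'e--Hopf identity of Gortler et al.\ with all face indices zero gives $\sum_v \ind{v}=2$, and the local maximum principle gives $\ind{v}\le 0$ for every $v\neq s,t$. The only difference is that the paper does not prove $\ind{s}=\ind{t}=1$ directly as your Step 2 does; it deduces this from the sum being $2$ together with the bound $\ind{v}\le 1$. Your global maximum-principle argument is therefore valid but not needed.
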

\vspace{-8pt}
Thus, no vertex has negative index, entailing that all vertices are injective, and we have proven Theorem~\ref{thm:vert_inj}.
\end{proof}

With this, we have proven that both edge injectivity and vertex injectivity hold, thereby proving Theorem~\ref{thm:injectivity} and global injectivity of the flow.

\subsection{Vector Field and Flow computation}
We summarize the vector field construction in Algorithm~\ref{alg:compute}. Each step of this algorithm is differentiable, and thus it provides a differentiable parameterization of injective vector fields, as a function of the Laplacian's (arbitrary) edge weights $\lap_{ij}$.

\begin{figure}[t]
     \centering
     \begin{subfigure}[b]{0.49\columnwidth}
         \includegraphics[width=\textwidth]{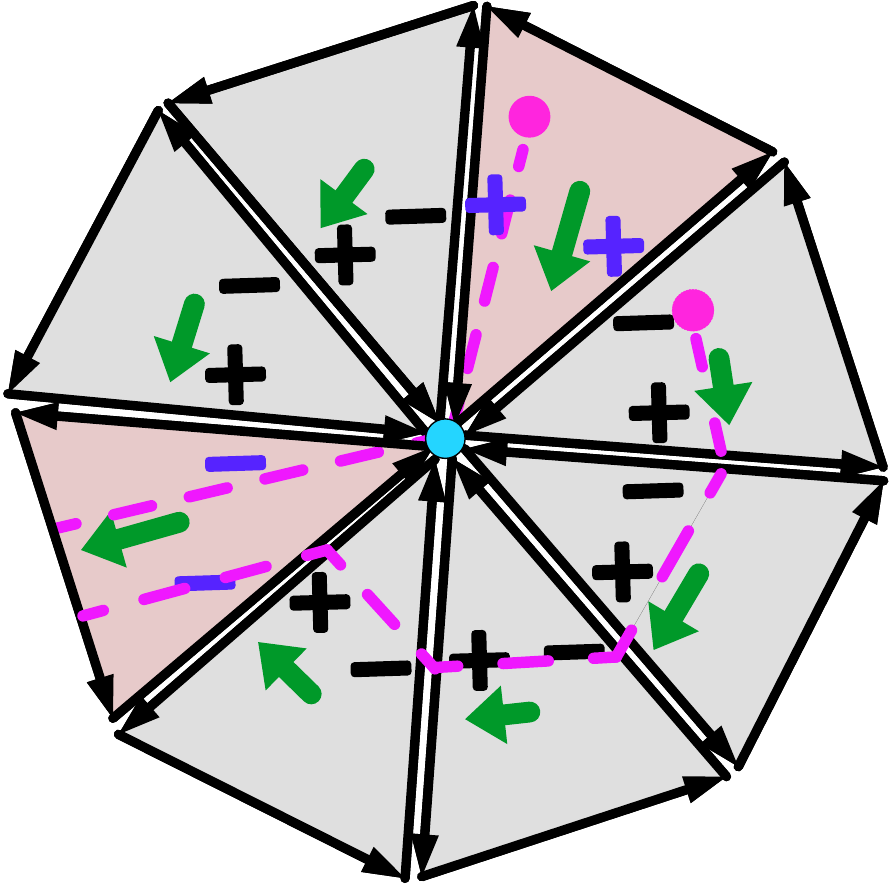}
         \captionsetup{justification=centering}
         \caption*{Injective (regular) vertex flow\\\textbf{Two} even-parity triangles}
     \end{subfigure}
     \hfill 
     \begin{subfigure}[b]{0.49\columnwidth}
         \includegraphics[width=\textwidth]{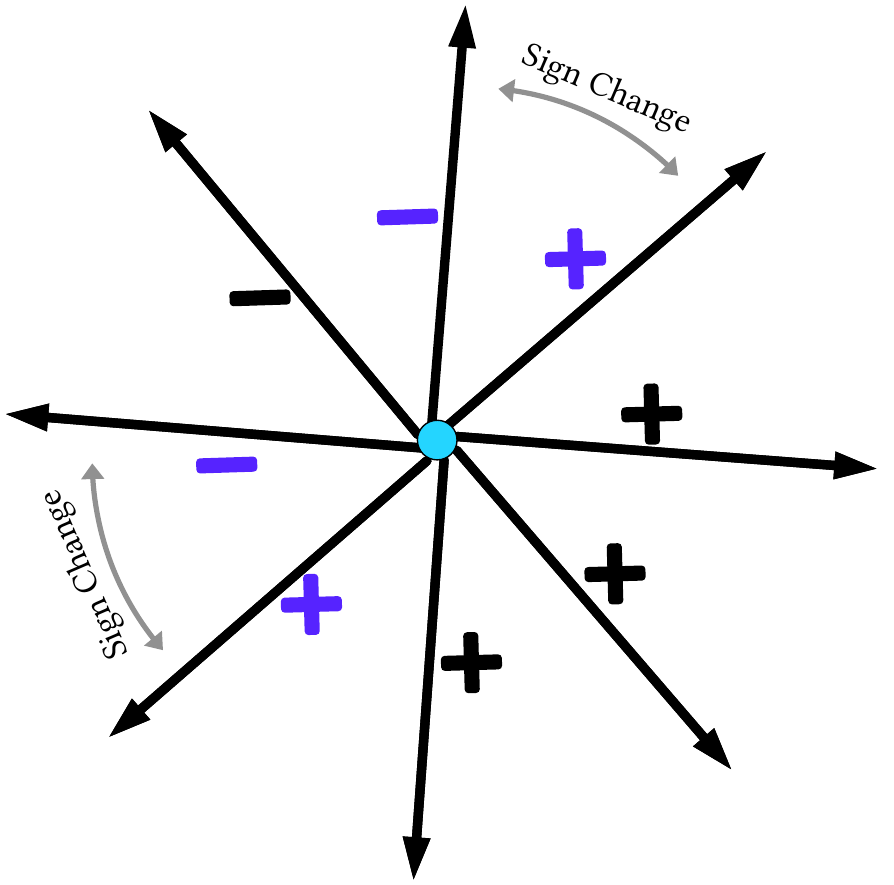}
         \captionsetup{justification=centering}
         \caption*{ Index = 0  per~\cite{gortler2006}\\\textbf{Two} sign changes}
     \end{subfigure}

     \begin{subfigure}[b]{0.49\columnwidth}
         \includegraphics[width=\textwidth]{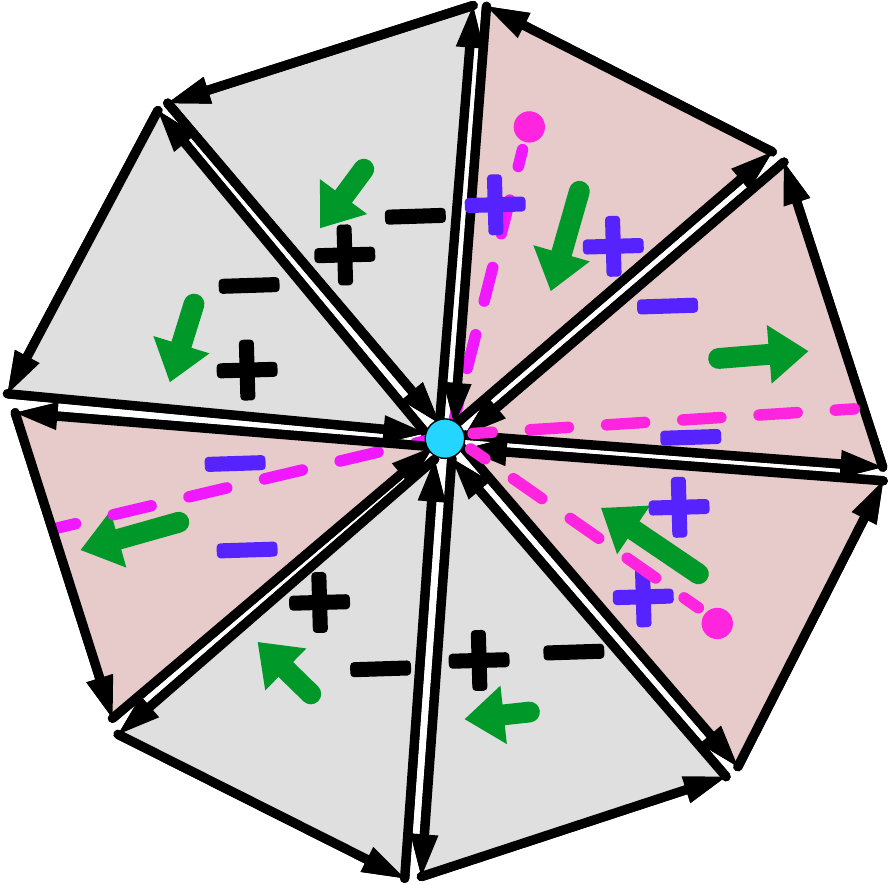}
         \captionsetup{justification=centering}
         \caption*{Non-injective vertex flow\\ \textbf{More than two} even-parity triangles}
     \end{subfigure}
     \hfill 
     \begin{subfigure}[b]{0.49\columnwidth}
         \includegraphics[width=\textwidth]{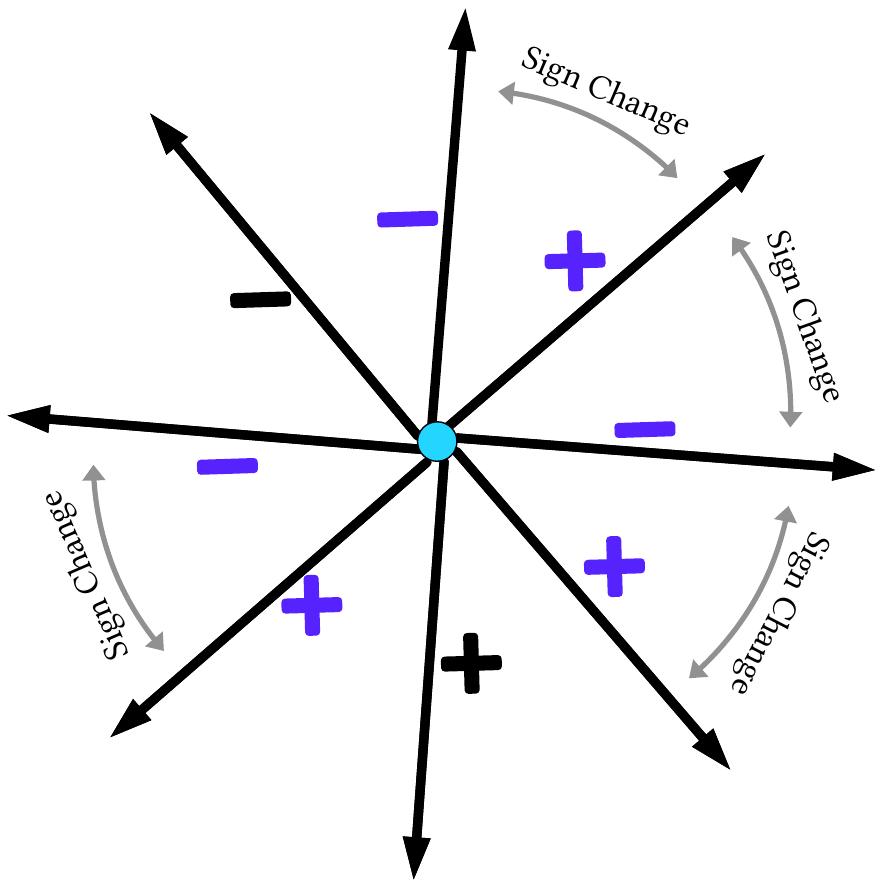}
         \captionsetup{justification=centering}
         \caption*{  Index = -1  per~\cite{gortler2006}\\\textbf{More than two} sign changes}
     \end{subfigure}
     \begin{subfigure}[b]{0.49\columnwidth}
         \includegraphics[width=\textwidth]{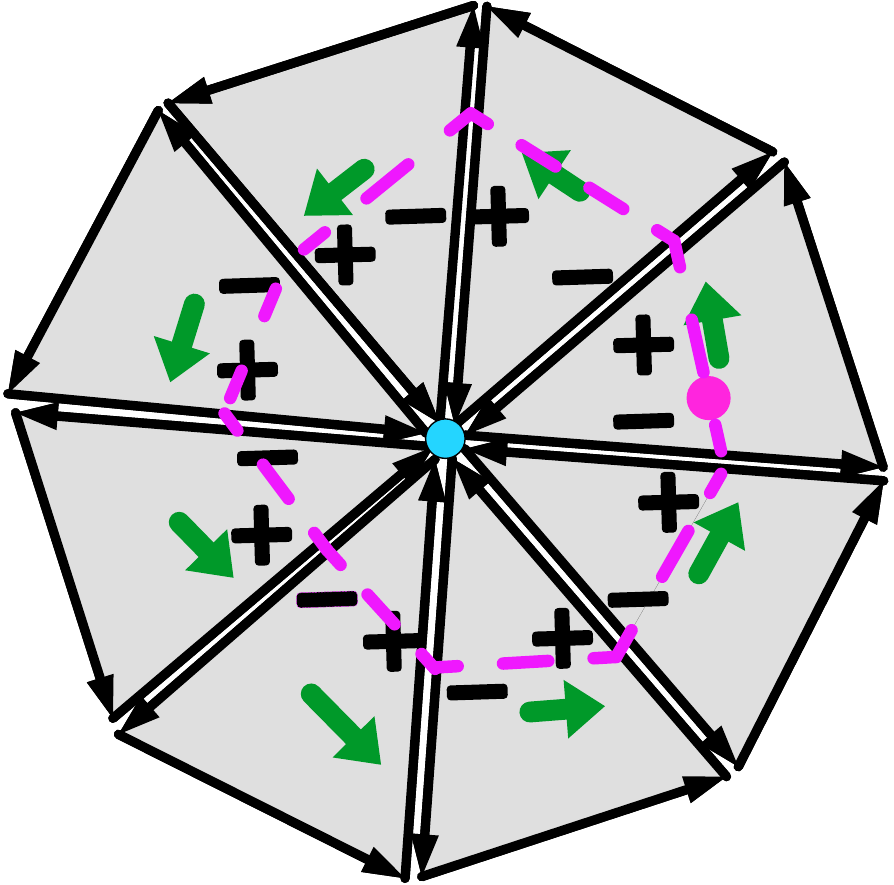}
         \captionsetup{justification=centering}
         \caption*{Injective (vortex) vertex flux\\\textbf{Zero} even-parity triangles}
     \end{subfigure}
     \hfill 
     \begin{subfigure}[b]{0.49\columnwidth}
         \includegraphics[width=\textwidth]{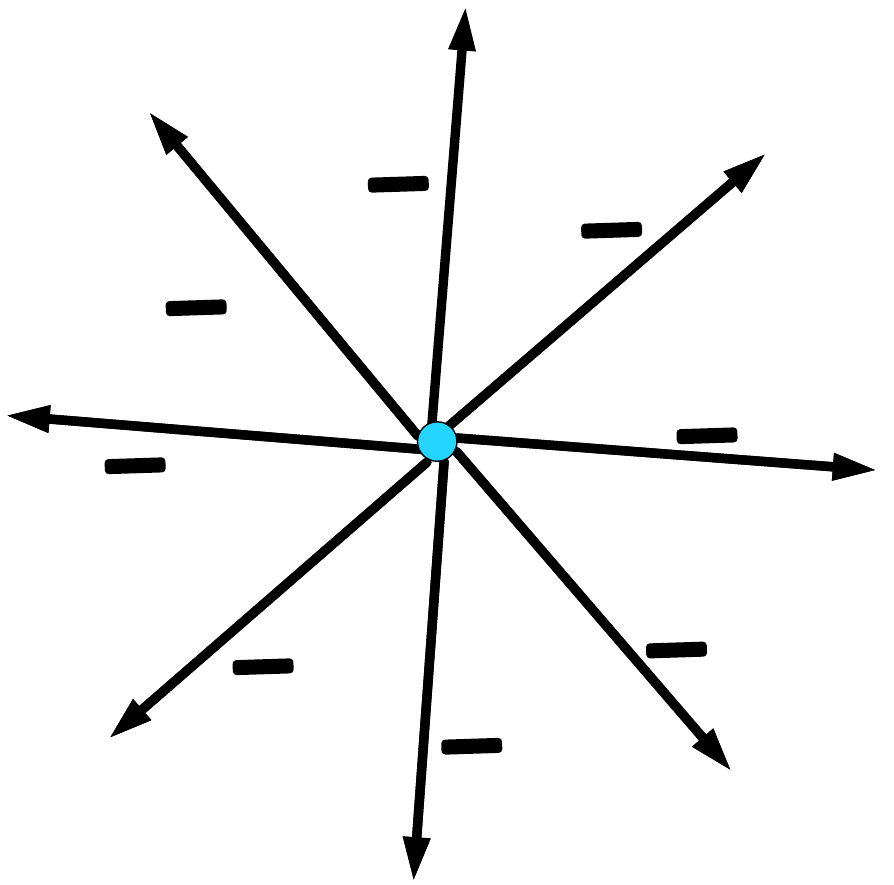}
         \captionsetup{justification=centering}
         \caption*{  Index = 1  per~\cite{gortler2006}\\\textbf{Zero} sign changes}
     \end{subfigure}
     
     \caption{\textbf{Vertex flux injectivity vs. vertex index classification}. Left column: the three types of vertex fluxes  on the center vertex of a 1-ring, as characterized in~\cite{campen2016}. Triangles with even parity (same sign on both edges) highlighted in red. Right column: the corresponding diagrams of the 1-forms from~\cite{gortler2006}.  Note that in each row, the sign changes on the right happen exactly on the triangles with even parity on the left. }
     \label{fig:vertex_inj}
\end{figure}
Lastly, to flow a given point $p\in\mesh$ on a  triangle $\tri$ with respect to the vector field $\vv$ for a given amount of time $T\in\real$, we propose a simple, efficient, and accurate algorithm, described in Algorithm~\ref{alg:flow}, which walks on the level sets of $\z$. The flow is piecewise linear and only needs to be evaluated at each intersection point with an edge. The choice of which edge is the next to be intersected, as well as the exact location on the edge on which the intersection happens, can all be easily deduced by simple arithmetic involving the value $c$ of the level set and the values of $\z$ on the vertices. 
Since the algorithm relies on the initial level set to find the intersection points, it does not suffer from accumulated error and can self-correct to always follow the level set (see Table~\ref{tab:to_fro_error}).
\begin{figure*}[t]
     \centering
         \includegraphics[width=\textwidth]{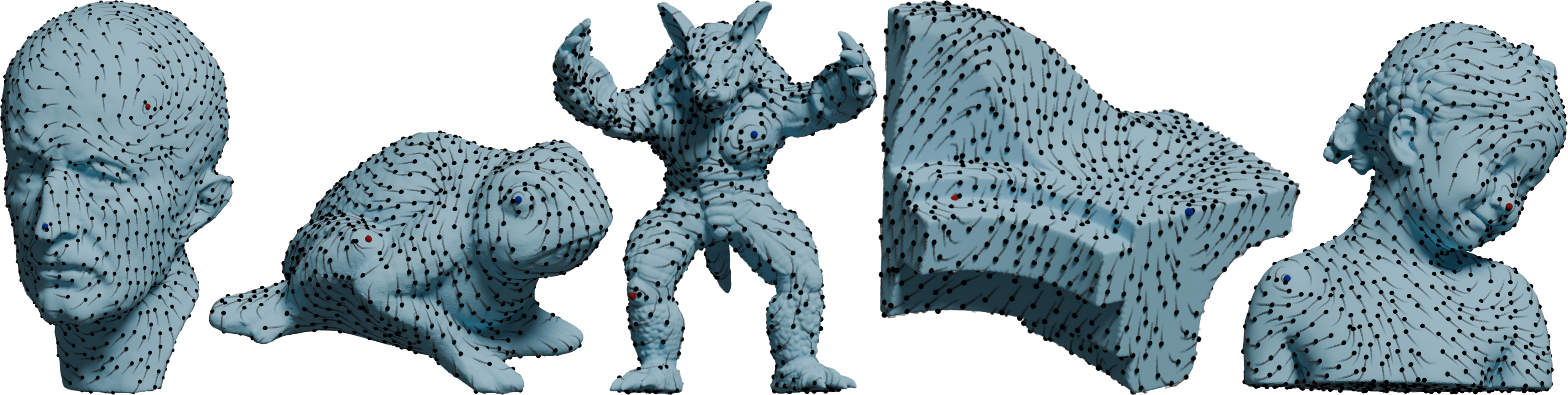}
    \caption{\textbf{Injective flows generated via Laplacians with arbitrary edge weights.} We sample random positive weights on the Laplacian's edges, which leads to more irregular flows. Theorem~\ref{thm:injectivity} guarantees the flows are injective.}
    \label{fig:rand_lap}
\end{figure*}
\begin{algorithm}[h]
    \SetKwInOut{Input}{Input}
    \SetKwInOut{Output}{Output}
    \caption{Flowing a point along the level set.}
    \label{alg:flow}
    
    \Input{Mesh $\mesh$, harmonic function $\z$, starting point $p$, vector field $\vv$.}
    \Output{End point $p_{\text{end}}$.}

    Store $c\in\real$, as the level set of the harmonic function $\z$ evaluated at  point $p$: if the barycentric coordinates of $p=\alpha v_i+\beta v_j+\gamma v_k$, then $c = \alpha \z_i + \beta \z_j + \gamma \z_k$

    Set the currently visited triangle, $\tri$, to the one that contains $p$ 

    Set the last intersection edge $\e$ of $\tri$ to be the one with which the point intersects when flowed in reverse.
    
    Set the remaining time to $T=1$

    \While{$T > 0$}{
        Out of the two edges that are not $\e$, find the edge $(i,j)$ of $\tri$ with which the isoline intersects, by choosing the one satisfying $c \leq\max\parr{ \z_i,\z_j} \text{ and } c\geq\min\parr{\z_i,\z_j}$

        Update the intersection edge, $\e=(i,j)$ 
    
        Find the intersection point between the isoline and the edge, solving $s\z_i + (1-s)  \z_j = c$ for $s$. 
         
        Set the intersection point with the next triangle as $p_{i+1} = sv_i+(1-s)v_j$, and the current triangle $\tri$ as the triangle adjacent to the half-edge $(j,i)$.
        
        Deduct the time it took to cross the triangle from $T$: $T=T-\norm{p_{i+1}-p_i}/\norm{\vv_\tri}$.
        
        \If{$T\leq0$}{
            \Return $p_{\text{end}}=p_{i+1}+T\vv_\tri$\;
        }
    
    }

\end{algorithm}

\section{Results}
As our main contribution is the theoretical Theorem~\ref{thm:injectivity}, we next empirically verify its correctness, show qualitative examples of injective flows, and also validate the ability of the differentiable pipeline to enable gradient-based optimization. We note that the injectivity of a vector field does not depend on the magnitude of the vectors and hence opt to normalize all vectors to unit norm.

\subsection{Empirical Evidence of Injectivity and Accuracy}

\begin{figure}[t]
     \centering
         \includegraphics[width=\columnwidth]{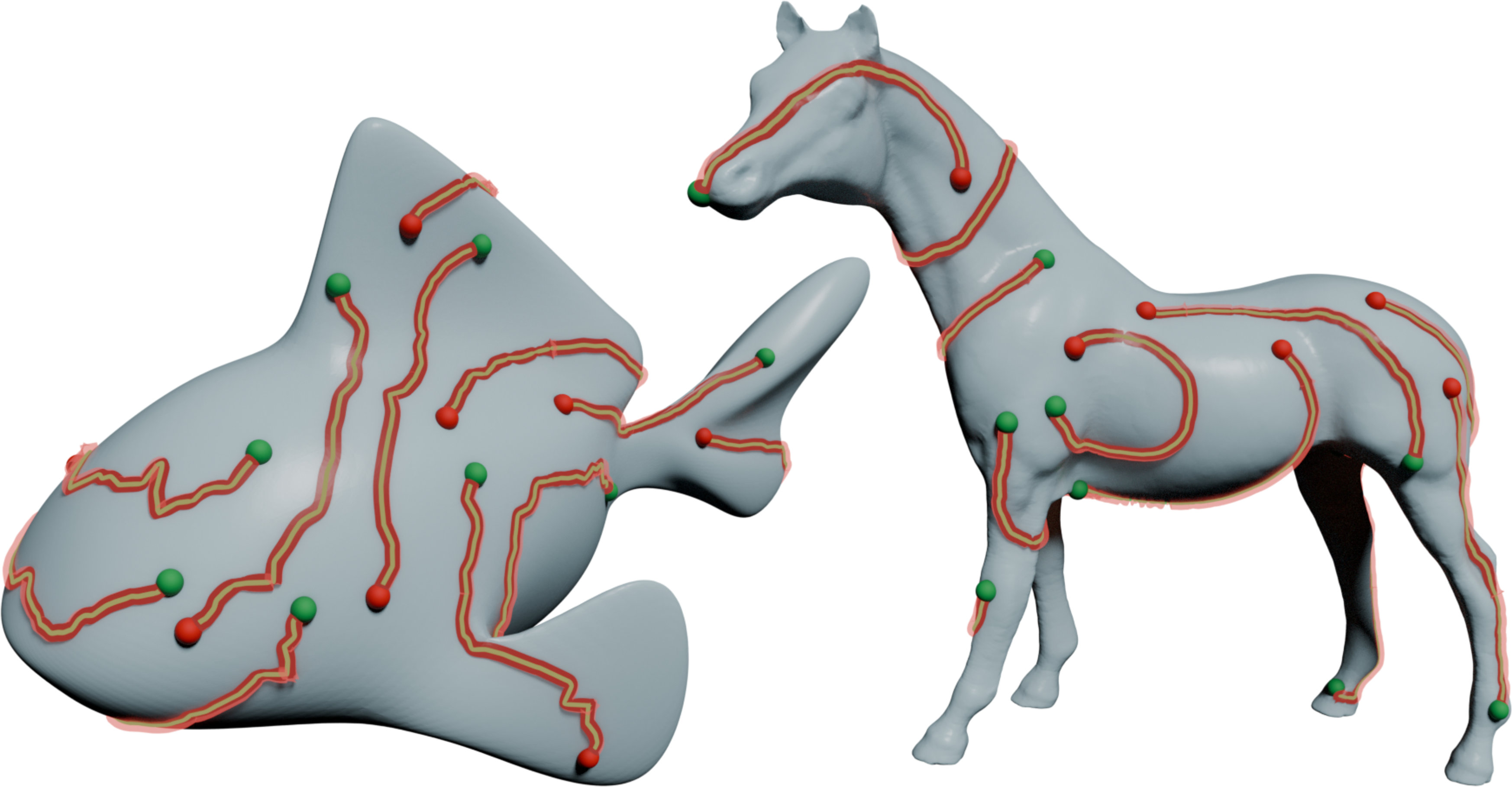}
         \caption{\textbf{Qualitative validation of the injectivity and accuracy of the flow.} We generate flows from random-weight Laplacians, and  advect sampled points (in red) forward (thick red curves) in time, then for each take its terminal location (green)  and flow it backwards in time (thin yellow curve), to verify it yields the exact same trajectory and reaches back to the starting point.}
         \label{fig:accuracy}
     \end{figure}
     \begin{figure}
     \hfill 
         \centering
         \includegraphics[width=\columnwidth]{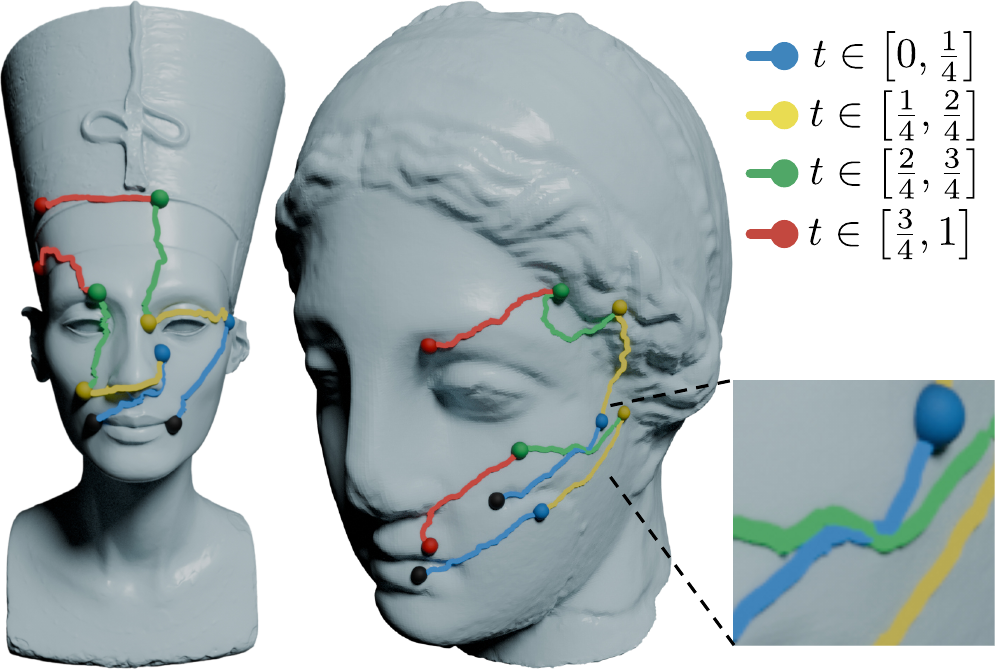}
         \caption{\textbf{Discrete time-dependent flows.} By alternating between four harmonic functions based on time, the flow produced can become extremely elaborate, each harmonic function's path  visualized in different color. Intersections can happen between different moments in time as shown in the zoom-in; this does not violate the injectivity of the flows.}
         \label{fig:multi}
    
\end{figure}

We begin by  empirically validating the injectivity of the flows, as well as the accuracy of our advection algorithm, Algorithm~\ref{alg:flow}. We flow a given point for a certain amount of time, and  then flip the vector field and flow the resulting point ``back in time'' -- we visualize this experiment in Figure~\ref{fig:accuracy} with red/yellow for forward/backward paths, validating injectivity as well as exhibiting the accuracy of the algorithm. For quantitative evaluation,  we repeat this process for random source and sink vertices alongside a random initialization of Laplacian weights, sample  $100,000$ random points, and measure the error between the start point and the point flowed back via the two steps above.  We  report the aggregated Mean Absolute Error (MAE) and Maximum Absolute Error (MXE) in Table~\ref{tab:to_fro_error}. No error exceeds $10^{-8}$ on average and $10^{-5}$ at maximum. A few models with triangles that have very small area or are near-degenerate result in less accurate results, mainly because of inaccuracy in numerics of the harmonic equation/gradient operator leading to a slight drift in the calculation of the time it takes to cross an edge.

\subsection{Flows Defined from Different Types of Laplacians}

Theorem~\ref{thm:injectivity}  guarantees an injective flow for any valid configuration of strictly positive Laplacian weights.  In Figure~\ref{fig:cot_and_mvc} we show flows computed via the cotangent Laplacian~\cite{pinkall1993} (clamped to have only positive weights) and via the Mean Value Coordinates (MVC)~\cite{floater2003mean} Laplacian. These lead to smooth flows along the harmonic isolines.
To show more elaborate flows, Figure~\ref{fig:rand_lap} shows results for Laplacians with arbitrary entries, which result in more oscillating vector fields and trajectories (see also Figure~\ref{fig:accuracy}). In all cases, the flows are injective. 

\subsection{Discrete Time-Dependent Flows}
\label{ss:multi}
In the continuous case, flows are often defined by time-dependent vector fields $\vv(p,t)$~\cite{chen2018neural}. Following our aim to provide a discrete counterpart of continuous flows, we can produce a \textit{time-discrete} analogue of the above, where we switch between different harmonic functions discretely, at different time intervals -- Figure~\ref{fig:multi} shows trajectories of points where the harmonic function governing the flow is changed four times. The resulting flow is more elaborate and, as shown on the right, can yield trajectories that intersect, although the flow remains injective (the two points each traverse the intersection point at different moments in time).

\subsection{Gradient-Based Injective Field Optimization} \label{sec:learn vf}

All steps of Algorithm~\ref{alg:compute} are differentiable, specifically with respect to the entries of the Laplacian; hence, by leveraging gradient-based optimization, we can effectively optimize the Laplacian edge weights~\cite{sun2024tuttenet,aigerman2024generative} thereby optimizing the vector field while maintaining its injectivity. We next show preliminary experiments validating the optimizeability of this representation. We implement the algorithm in PyTorch, which provides automatic differentiation. We use a simple optimization scheme, via the Adam~\cite{kingma2015} optimizer with a fixed step size of $0.1$, run for $5000$ iterations.

\paragraph*{Approximating  non-injective vector fields.}


In Figure~\ref{fig:learn vf}, we show a result of optimizing a randomly-initialized injective vector field to align with a given, non-injective vector field (obtained via~\cite{crane2010}), by minimizing the mean squared error (MSE) between the per-face target vectors and the optimized injective field. This leads to an injective vector field that still closely mimics the broader directional trends of the target field, as shown in the blowups.

\paragraph*{Satisfying sparse vector constraints.}
\begin{figure}[t]
     \centering
     
     \hfill 
     \begin{subfigure}[b]{0.32\columnwidth}
         \includegraphics[width=\textwidth]{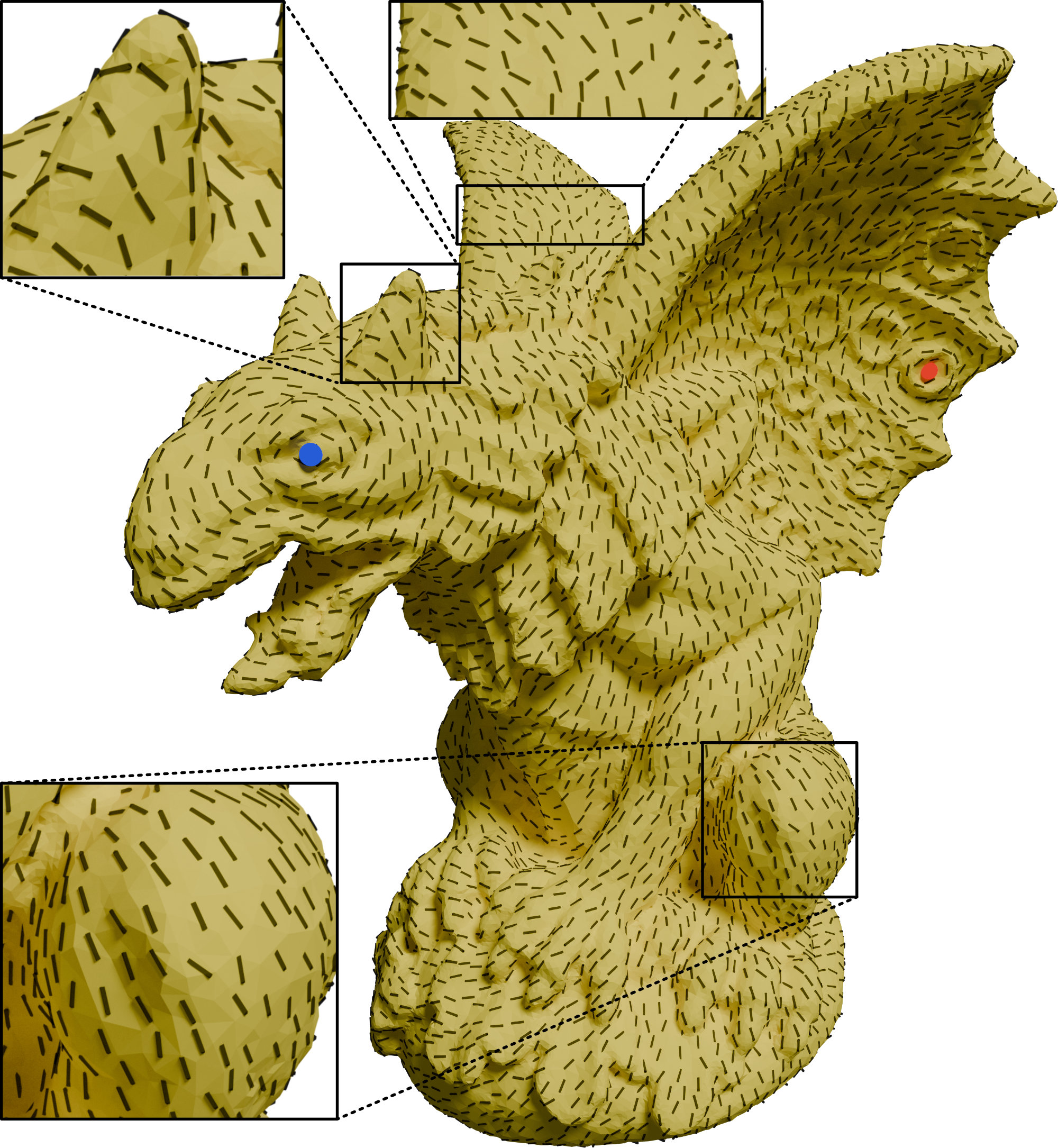}
         \caption{Initial injective vector field}
     \end{subfigure}
     \hfill
     \begin{subfigure}[b]{0.32\columnwidth}
         \includegraphics[width=\textwidth]{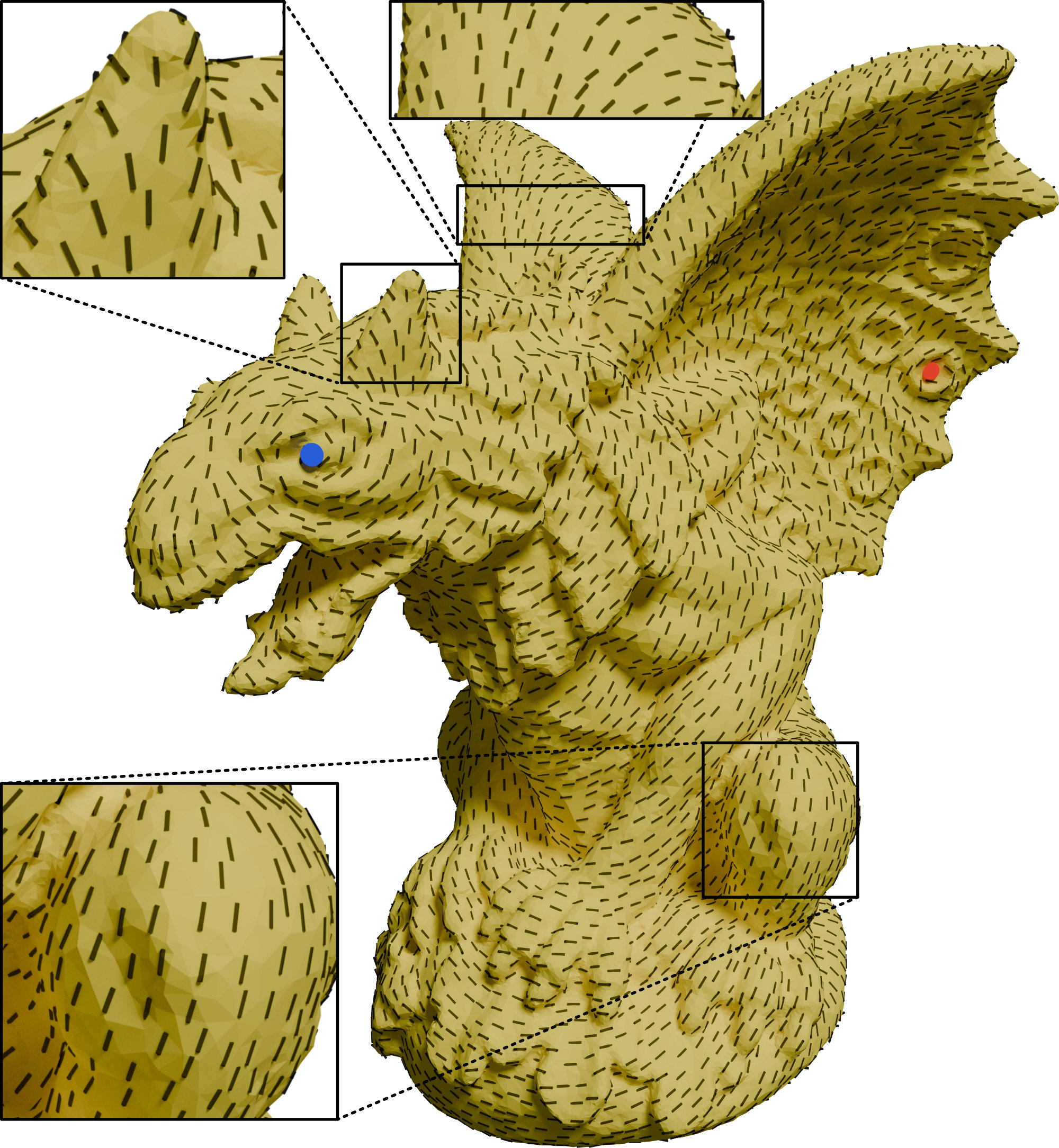}
         \caption{Optimized injective vector field}
     \end{subfigure}
     \begin{subfigure}[b]{0.32\columnwidth}
         \includegraphics[width=\textwidth]{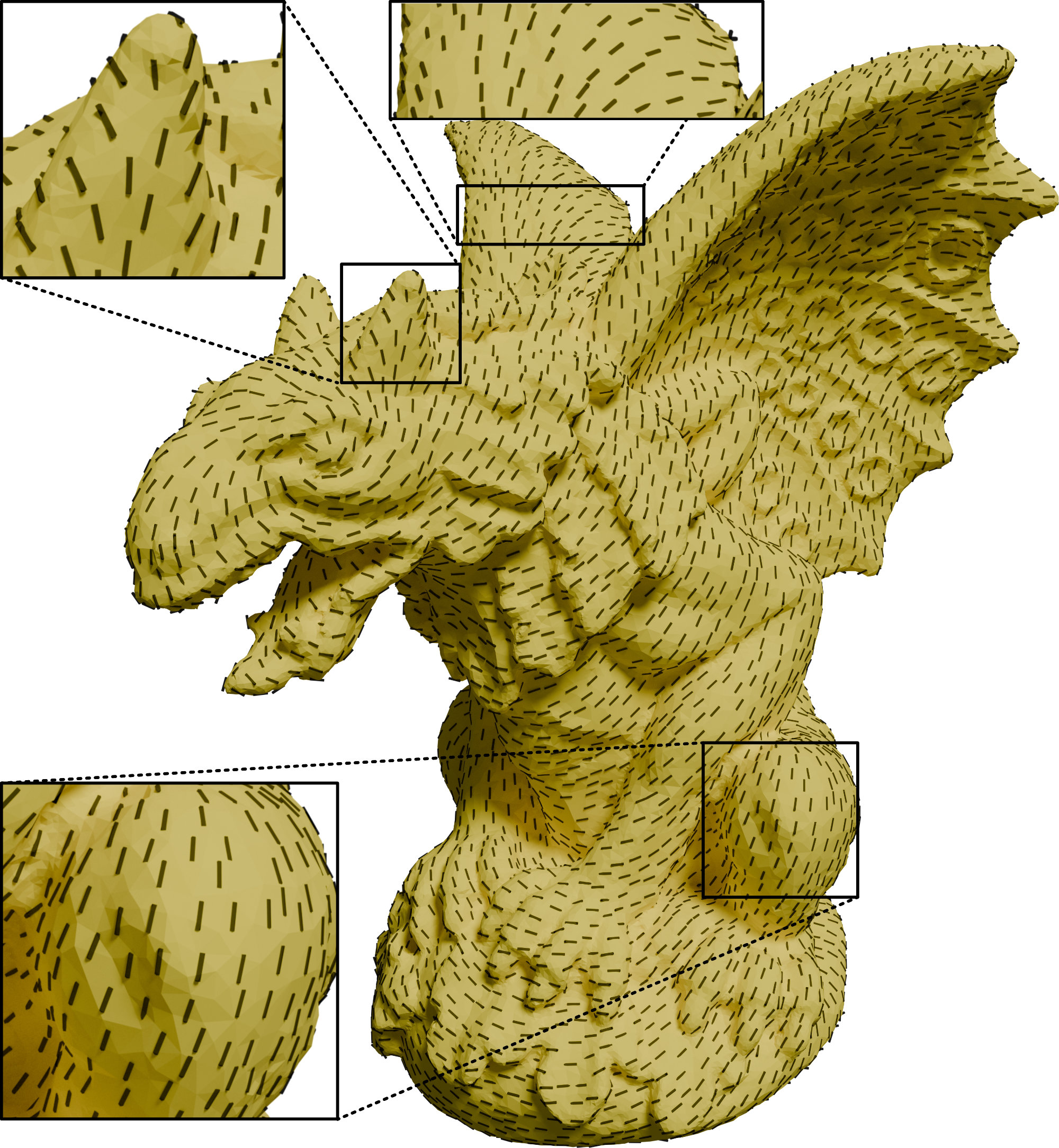}
         \caption{Target non-injective vector field}
     \end{subfigure}
    \caption{\textbf{Optimizing an injective vector field to approximate a  non-injective target vector field}.  The optimized field aligns to the target in many places, without sacrificing injectivity.}
    \label{fig:learn vf}
\end{figure}
\begin{figure}[t]
    \centering
     \begin{subfigure}[b]{0.495\columnwidth}
         \includegraphics[width=\textwidth]{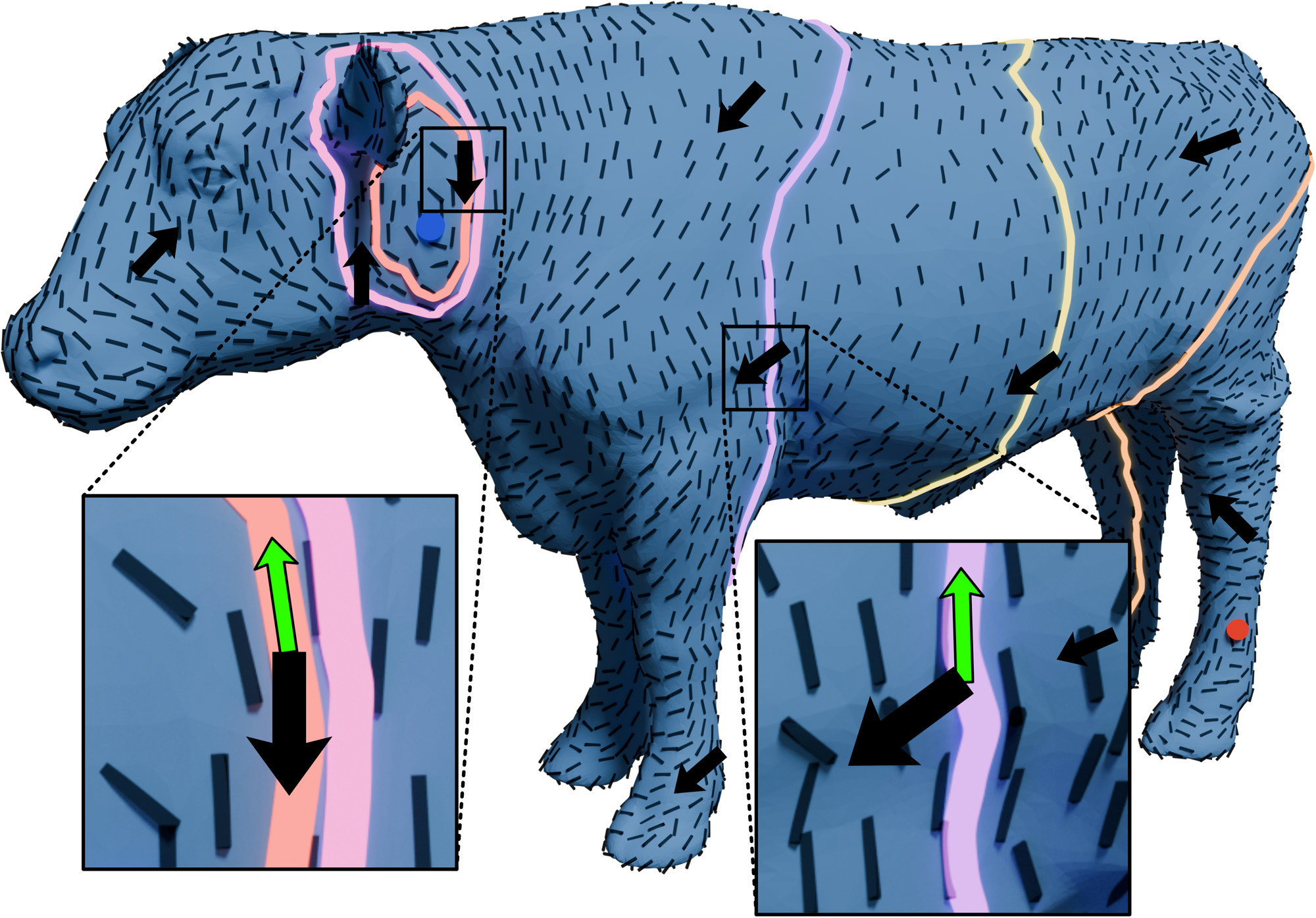}
         \caption{Initial injective field}
         \label{subfig: vf constraint before}
     \end{subfigure}
     \begin{subfigure}[b]{0.494\columnwidth}
         \includegraphics[width=\textwidth]{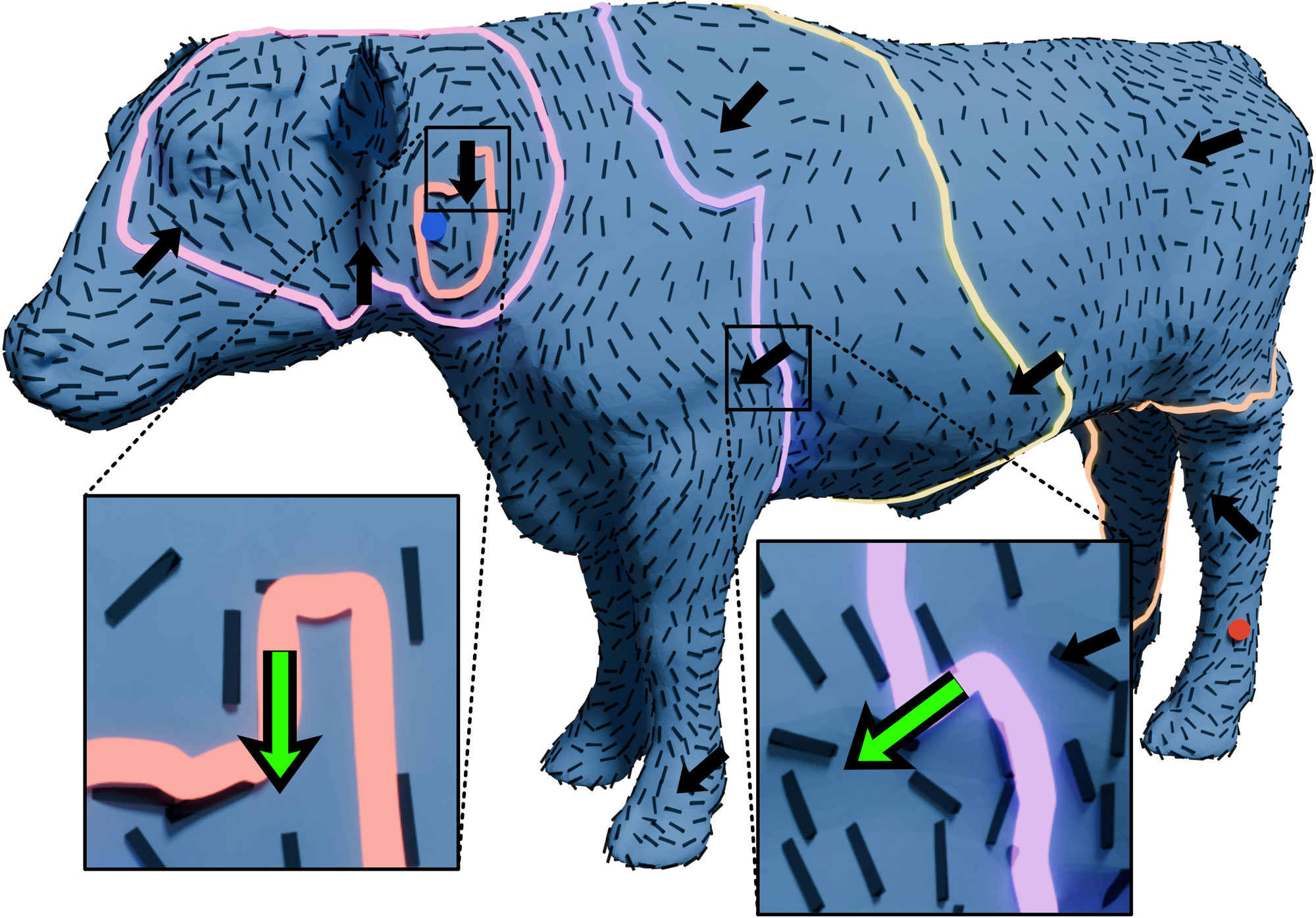}
         \caption{Optimized injective field}
         \label{subfig: vf constraint after}
     \end{subfigure}
    \caption{\textbf{Optimizing an injective vector field to satisfy sparse constraints}. The vector field is optimized to fit to directional constraints (black arrows). In the insets, the injective vector field's corresponding direction is visualized as a red arrow. The continuous trajectories along five of the constrained faces are rendered as colored curves.}
    \label{fig:optimize_sparse}
\end{figure}

We evaluate the ability of our optimization to satisfy sparse directional constraints across specific faces of a mesh. In Figure \ref{fig:optimize_sparse}, black arrows denote ten user-defined constraint vectors. We minimize the MSE loss between the injective vector field and these vectors, exclusively on the constrained faces. We visualize the continuous trajectories passing through five of these constrained faces on the mesh. As shown in the insets of Figure \ref{subfig: vf constraint before}, the initial vector field (red arrows) naturally deviates from the arbitrary constraints imposed by the target vectors, and is corrected by optimization to achieve near-perfect alignment, sometimes via rather contorted paths.

\section{Conclusion}

Theorem~\ref{thm:injectivity} draws a connection between harmonic  functions and injective flows and proposes a Tutte-like method for computing them. We hope that in the coming years, this method will find applications within the geometry processing community similar to those of Tutte's embedding, e.g., producing injective initializations for vector field optimization algorithms, serving as a fast method to construct an injective vector field, or optimizing injective fields. 

While our main contribution is theoretical, on a practical level, our method comes with several limitations, which we hope to solve in future work. First, the method is limited to specific topology and boundary conditions. We believe that extensions to some other specific boundary conditions on specific domains are possible. Furthermore, similarly to Tutte's embedding, injective extensions to non-euclidean domains~\cite{aigerman2016hyperbolic} can lead to a more general and less constrained construction. We also note that we have only shown a few ``proof-of-concept'' experiments to validate the capability of optimizing the vector field to specific goals, as we focused mainly on the injectivity of the flow, and used PyTorch's vanilla autodiff with Adam~\cite{kingma2015} that does not use adaptive step size nor line search. The current optimization framework is  insufficient on its own to optimize for flows that can be used in downstream tasks. As our top future priority, we mark the goal of devising a framework that can output production-ready flows. This requires regularizing the vector fields, e.g., via as-Killing-as-possible~\cite{benchen2010killing} or a vector Dirichlet energy ~\cite{stein2020simple}.  Lastly, our parameterization of injective flows via gradients of a family of harmonic functions does not cover the full space of injective vector fields. As several of our flows are composed (as in Figure~\ref{fig:multi}), the space of injective flows is increased, and we set as important future work to show (both theoretically and practically) that with enough such compositions, any flow can be attained.

\begin{table}[t]
  \centering
  \small
  \begin{tabular*}{\linewidth}{@{\extracolsep{\fill}} l r c c @{}}
    \toprule
    \textbf{Mesh} & \textbf{Faces} & MAE & MXE \\
    & & \scriptsize($10^{-9}$m) & \scriptsize($10^{-6}$m) \\
    \midrule
    Spot & 5,856 & $<$0.001 & $<$0.001 \\
    Cow & 8,626 & $<$0.001 & $<$0.001 \\
    Homer & 12,000 & $<$0.001 & $<$0.001 \\
    Fandisk & 12,946 & $<$0.001 & $<$0.001 \\
    Cheburashka & 13,334 & $<$0.001 & $<$0.001 \\
    Blub & 14,208 & $<$0.001 & $<$0.001 \\
    Horse & 39,698 & 0.173 & 0.075 \\
    Stanford Bunny & 69,664 & $<$0.001 & $<$0.001 \\
    Max Planck & 98,260 & $<$0.001 & $<$0.001 \\
    Nefertiti & 99,938 & $<$0.001 & $<$0.001 \\
    Vase-Lion & 127,104 & $<$0.001 & $<$0.001 \\
    Bimba & 149,524 & $<$0.001 & $<$0.001 \\
    Gargoyle & 171,112 & $<$0.001 & $<$0.001 \\
    Igea & 268,686 & $<$0.001 & $<$0.001 \\
    Armadillo & 345,944 & $<$0.001 & $<$0.001 \\
    Frog & 392,552 & $<$0.001 & $<$0.001 \\
    Lucy & 525,814 & 0.002 & 0.001 \\
    Crab & 602,316 & 1.011 & 3.658 \\
    Dragon & 721,890 & 0.071 & 0.762 \\
    Octopus & 1,083,284 & 0.863 & 1.230 \\
    \bottomrule
  \end{tabular*}
  \caption{\textbf{Quantitative evaluation of injectivity and accuracy of the flow}. We measure the average error (MAE) and max error (MXE)  between points and their final location when flowed back and forth along vector fields derived from random Laplacians. }
  \label{tab:to_fro_error}
\end{table}

\bibliographystyle{eg-alpha-doi} 
\bibliography{main-ref-sub}       

@inproceedings{
    chen2023riemannianfm,
    title={Riemannian Flow Matching on General Geodesics},
    author={Ricky T. Q. Chen and Yaron Lipman},
    year={2023},
}

@inproceedings{degoes2016,
author = {do Goes, Fernando and Desbrun, Mathieu and Tong, Yiying},
title = {Vector field processing on triangle meshes},
year = {2015},
isbn = {9781450339247},
publisher = {Association for Computing Machinery},
address = {New York, NY, USA},
url = {https://doi.org/10.1145/2818143.2818167},
doi = {10.1145/2818143.2818167},
booktitle = {SIGGRAPH Asia 2015 Courses},
articleno = {17},
numpages = {48},
location = {Kobe, Japan},
series = {SA '15}
}

@article{dong2005harmonic,
  title={Harmonic functions for quadrilateral remeshing of arbitrary manifolds},
  author={Dong, Shen and Kircher, Scott and Garland, Michael},
  journal={Computer aided geometric design},
  volume={22},
  number={5},
  pages={392--423},
  year={2005},
  publisher={Elsevier}
}

@inproceedings{stein2020simple,
  title={A simple discretization of the vector dirichlet energy},
  author={Stein, Oded and Wardetzky, Max and Jacobson, Alec and Grinspun, Eitan},
  booktitle={Computer Graphics Forum},
  volume={39},
  number={5},
  pages={81--92},
  year={2020},
  organization={Wiley Online Library}
}

@inproceedings{tong2006designing,
  title={Designing quadrangulations with discrete harmonic forms},
  author={Tong, Yiying and Alliez, Pierre and Cohen-Steiner, David and Desbrun, Mathieu},
  booktitle={Eurographics symposium on geometry processing},
  year={2006}
}

@article{vekhter2019weaving,
author = {Vekhter, Josh and Zhuo, Jiacheng and Fandino, Luisa F Gil and Huang, Qixing and Vouga, Etienne},
title = {Weaving geodesic foliations},
year = {2019},
issue_date = {August 2019},
publisher = {Association for Computing Machinery},
address = {New York, NY, USA},
volume = {38},
number = {4},
issn = {0730-0301},
url = {https://doi.org/10.1145/3306346.3323043},
doi = {10.1145/3306346.3323043},
journal = {ACM Trans. Graph.},
month = jul,
articleno = {34},
numpages = {22}
}

@inproceedings{lipman2023,
  author       = {Yaron Lipman and
                  Ricky T. Q. Chen and
                  Heli Ben{-}Hamu and
                  Maximilian Nickel and
                  Matthew Le},
  title        = {Flow Matching for Generative Modeling},
  booktitle    = {The Eleventh International Conference on Learning Representations,
                  {ICLR} 2023, Kigali, Rwanda, May 1-5, 2023},
  publisher    = {OpenReview.net},
  year         = {2023},
  url          = {https://openreview.net/forum?id=PqvMRDCJT9t},
  bibsource    = {dblp computer science bibliography, https://dblp.org}
}

@article{verhoeven2022dev2pq,
  title={Dev2PQ: Planar quadrilateral strip remeshing of developable surfaces},
  author={Verhoeven, Floor and Vaxman, Amir and Hoffmann, Tim and Sorkine-Hornung, Olga},
  journal={ACM Transactions on Graphics (TOG)},
  volume={41},
  number={3},
  pages={1--18},
  year={2022},
  publisher={ACM New York, NY}
}

@article{sharp2019vector,
  title={The vector heat method},
  author={Sharp, Nicholas and Soliman, Yousuf and Crane, Keenan},
  journal={ACM Transactions on Graphics (TOG)},
  volume={38},
  number={3},
  pages={1--19},
  year={2019},
  publisher={ACM New York, NY, USA}
}

@article{palmer2024lifting,
  title={Lifting directional fields to minimal sections},
  author={Palmer, David and Chern, Albert and Solomon, Justin},
  journal={ACM Transactions on Graphics (TOG)},
  volume={43},
  number={4},
  pages={1--20},
  year={2024},
  publisher={ACM New York, NY, USA}
}

@inproceedings{vaxman2016,
author = {Vaxman, Amir and Campen, Marcel and Diamanti, Olga and Bommes, David and Hildebrandt, Klaus and Technion, Mirela Ben-Chen and Panozzo, Daniele},
title = {Directional field synthesis, design, and processing},
year = {2017},
isbn = {9781450350143},
publisher = {Association for Computing Machinery},
address = {New York, NY, USA},
url = {https://doi.org/10.1145/3084873.3084921},
doi = {10.1145/3084873.3084921},
booktitle = {ACM SIGGRAPH 2017 Courses},
articleno = {12},
numpages = {30},
location = {Los Angeles, California},
series = {SIGGRAPH '17}
}

@article{crane2010,
author={Keenan Crane and Mathieu Desbrun and Peter Schr\"{o}der},
title={Trivial Connections on Discrete Surfaces},
journal={Computer Graphics Forum (SGP)},
volume={29},
number={5},
pages={1525-1533},
year={2010}
}

@article{benchen2010killing,
author = {Ben-Chen, Mirela and Butscher, Adrian and Solomon, Justin and Guibas, Leonidas},
title = {On Discrete Killing Vector Fields and Patterns on Surfaces},
journal = {Computer Graphics Forum},
volume = {29},
number = {5},
pages = {1701-1711},
doi = {https://doi.org/10.1111/j.1467-8659.2010.01779.x},
url = {https://onlinelibrary.wiley.com/doi/abs/10.1111/j.1467-8659.2010.01779.x},
eprint = {https://onlinelibrary.wiley.com/doi/pdf/10.1111/j.1467-8659.2010.01779.x},
year = {2010}
}

@article{falorsi2020,
  title={Neural Ordinary Differential Equations on Manifolds},
  author={Luca Falorsi and Patrick Forr'e},
  journal={ArXiv},
  year={2020},
  volume={abs/2006.06663},
  url={https://api.semanticscholar.org/CorpusID:219573582}
}

@inproceedings{lou2020,
 author = {Lou, Aaron and Lim, Derek and Katsman, Isay and Huang, Leo and Jiang, Qingxuan and Lim, Ser Nam and De Sa, Christopher M},
 booktitle = {Advances in Neural Information Processing Systems},
 editor = {H. Larochelle and M. Ranzato and R. Hadsell and M.F. Balcan and H. Lin},
 pages = {17548--17558},
 publisher = {Curran Associates, Inc.},
 title = {Neural Manifold Ordinary Differential Equations},
 url = {https://proceedings.neurips.cc/paper_files/paper/2020/file/cbf8710b43df3f2c1553e649403426df-Paper.pdf},
 volume = {33},
 year = {2020}
}

@inproceedings{mathieu2020,
author = {Mathieu, Emile and Nickel, Maximilian},
title = {Riemannian continuous normalizing flows},
year = {2020},
isbn = {9781713829546},
publisher = {Curran Associates Inc.},
address = {Red Hook, NY, USA},
booktitle = {Proceedings of the 34th International Conference on Neural Information Processing Systems},
articleno = {211},
numpages = {13},
location = {Vancouver, BC, Canada},
series = {NIPS '20}
}

@inproceedings{rozen2021,
 author = {Rozen, Noam and Grover, Aditya and Nickel, Maximilian and Lipman, Yaron},
 booktitle = {Advances in Neural Information Processing Systems},
 editor = {M. Ranzato and A. Beygelzimer and Y. Dauphin and P.S. Liang and J. Wortman Vaughan},
 pages = {17669--17680},
 publisher = {Curran Associates, Inc.},
 title = {Moser Flow: Divergence-based Generative Modeling on Manifolds},
 url = {https://proceedings.neurips.cc/paper_files/paper/2021/file/93a27b0bd99bac3e68a440b48aa421ab-Paper.pdf},
 volume = {34},
 year = {2021}
}

@inproceedings{bose2020,
  title = 	 {Latent Variable Modelling with Hyperbolic Normalizing Flows},
  author =       {Bose, Joey and Smofsky, Ariella and Liao, Renjie and Panangaden, Prakash and Hamilton, Will},
  booktitle = 	 {Proceedings of the 37th International Conference on Machine Learning},
  pages = 	 {1045--1055},
  year = 	 {2020},
  editor = 	 {III, Hal Daumé and Singh, Aarti},
  volume = 	 {119},
  series = 	 {Proceedings of Machine Learning Research},
  month = 	 {13--18 Jul},
  publisher =    {PMLR},
  url = 	 {https://proceedings.mlr.press/v119/bose20a.html}
}

@inproceedings{chen2024,
 author = {Chen, Ricky T. Q. and Lipman, Yaron},
 booktitle = {International Conference on Learning Representations},
 editor = {B. Kim and Y. Yue and S. Chaudhuri and K. Fragkiadaki and M. Khan and Y. Sun},
 pages = {47922--47945},
 title = {Flow Matching on General Geometries},
 url = {https://proceedings.iclr.cc/paper_files/paper/2024/file/d1f9936d3be6997ffffab692977eebe6-Paper-Conference.pdf},
 volume = {2024},
 year = {2024}
}

@inproceedings{benhamu2022,
  title = 	 {Matching Normalizing Flows and Probability Paths on Manifolds},
  author =       {Ben-Hamu, Heli and Cohen, Samuel and Bose, Joey and Amos, Brandon and Nickel, Maximillian and Grover, Aditya and Chen, Ricky T. Q. and Lipman, Yaron},
  booktitle = 	 {Proceedings of the 39th International Conference on Machine Learning},
  pages = 	 {1749--1763},
  year = 	 {2022},
  editor = 	 {Chaudhuri, Kamalika and Jegelka, Stefanie and Song, Le and Szepesvari, Csaba and Niu, Gang and Sabato, Sivan},
  volume = 	 {162},
  series = 	 {Proceedings of Machine Learning Research},
  month = 	 {17--23 Jul},
  publisher =    {PMLR},
  url = 	 {https://proceedings.mlr.press/v162/ben-hamu22a.html}
}

@article{campen2016,
author = {Campen, Marcel and Silva, Cl\'{a}udio T. and Zorin, Denis},
title = {Bijective maps from simplicial foliations},
year = {2016},
issue_date = {July 2016},
publisher = {Association for Computing Machinery},
address = {New York, NY, USA},
volume = {35},
number = {4},
issn = {0730-0301},
url = {https://doi.org/10.1145/2897824.2925890},
doi = {10.1145/2897824.2925890},
journal = {ACM Trans. Graph.},
month = jul,
articleno = {74},
numpages = {15}
}

@article{hinderink2021,
author = {Hinderink, Steffen and Campen, Marcel},
title = {Galaxy Maps: Localized Foliations for Bijective Volumetric Mapping},
year = {2023},
issue_date = {August 2023},
publisher = {Association for Computing Machinery},
address = {New York, NY, USA},
volume = {42},
number = {4},
issn = {0730-0301},
url = {https://doi.org/10.1145/3592410},
doi = {10.1145/3592410},
journal = {ACM Trans. Graph.},
month = jul,
articleno = {129},
numpages = {16}
}

@inproceedings{wang2025power,
  title={Power-Linear Polar Directional Fields},
  author={Wang, Jiabao Brad and Vaxman, Amir},
  booktitle={Proceedings of the Special Interest Group on Computer Graphics and Interactive Techniques Conference Conference Papers},
  pages={1--10},
  year={2025}
}

@article{chen2018neural,
  title={Neural ordinary differential equations},
  author={Chen, Ricky TQ and Rubanova, Yulia and Bettencourt, Jesse and Duvenaud, David K},
  journal={Advances in neural information processing systems},
  volume={31},
  year={2018}
}

@article{schmidt2020inter,
  title={Inter-surface maps via constant-curvature metrics},
  author={Schmidt, Patrick and Campen, Marcel and Born, Janis and Kobbelt, Leif},
  journal={ACM Transactions on Graphics (TOG)},
  volume={39},
  number={4},
  pages={119--1},
  year={2020},
  publisher={ACM New York, NY, USA}
}

@article{aigerman2015seamless,
  title={Seamless surface mappings},
  author={Aigerman, Noam and Poranne, Roi and Lipman, Yaron},
  journal={ACM Transactions on Graphics (TOG)},
  volume={34},
  number={4},
  pages={1--13},
  year={2015},
  publisher={ACM New York, NY, USA}
}

@article{aigerman2014lifted,
  title={Lifted bijections for low distortion surface mappings},
  author={Aigerman, Noam and Poranne, Roi and Lipman, Yaron},
  journal={ACM Transactions on Graphics (TOG)},
  volume={33},
  number={4},
  pages={1--12},
  year={2014},
  publisher={ACM New York, NY, USA}
}

@article{aigerman2016hyperbolic,
  title={Hyperbolic orbifold tutte embeddings.},
  author={Aigerman, Noam and Lipman, Yaron},
  journal={ACM Trans. Graph.},
  volume={35},
  number={6},
  pages={217--1},
  year={2016}
}

@incollection{schreiner2004inter,
  title={Inter-surface mapping},
  author={Schreiner, John and Asirvatham, Arul and Praun, Emil and Hoppe, Hugues},
  booktitle={ACM SIGGRAPH 2004 Papers},
  pages={870--877},
  year={2004}
}

@article{aigerman2015orbifold,
  title={Orbifold tutte embeddings.},
  author={Aigerman, Noam and Lipman, Yaron},
  journal={ACM Trans. Graph.},
  volume={34},
  number={6},
  pages={190--1},
  year={2015}
}

@inproceedings{tsui2013globally,
  title={Globally optimal cortical surface matching with exact landmark correspondence},
  author={Tsui, Alex and Fenton, Devin and Vuong, Phong and Hass, Joel and Koehl, Patrice and Amenta, Nina and Coeurjolly, David and DeCarli, Charles and Carmichael, Owen},
  booktitle={International Conference on Information Processing in Medical Imaging},
  pages={487--498},
  year={2013},
  organization={Springer}
}

@article{aigerman2017spherical,
  title={Spherical orbifold tu e embeddings},
  author={Aigerman, Noam and Kovalsky, Shahar Z and Lipman, Yaron},
  journal={ACM Trans. Graph},
  volume={36},
  number={4},
  pages={90},
  year={2017}
}

@article{boksebeld2022high,
  title={High-order directional fields},
  author={Boksebeld, Iwan and Vaxman, Amir},
  journal={ACM Transactions on Graphics (TOG)},
  volume={41},
  number={6},
  pages={1--17},
  year={2022},
  publisher={ACM New York, NY, USA}
}

@article{solomon2019optimal,
  title={Optimal transport-based polar interpolation of directional fields},
  author={Solomon, Justin and Vaxman, Amir},
  journal={ACM Transactions on Graphics (TOG)},
  volume={38},
  number={4},
  pages={1--13},
  year={2019},
  publisher={ACM New York, NY, USA}
}

@article{huynh2015,
  title={Bijective Deformations in $\mathbb{R}^n$ via Integral Curve Coordinates},
  author={Huynh, Lisa and Gingold, Yotam},
  journal={arXiv preprint arXiv:1505.00073},
  year={2015}
}

@article{nieser2011,
author = {Nieser, M. and Reitebuch, U. and Polthier, K.},
title = {CubeCover– Parameterization of 3D Volumes},
journal = {Computer Graphics Forum},
volume = {30},
number = {5},
pages = {1397-1406},
doi = {https://doi.org/10.1111/j.1467-8659.2011.02014.x},
url = {https://onlinelibrary.wiley.com/doi/abs/10.1111/j.1467-8659.2011.02014.x},
eprint = {https://onlinelibrary.wiley.com/doi/pdf/10.1111/j.1467-8659.2011.02014.x},
year = {2011}
}

@article{bommes2013,
author = {Bommes, David and Campen, Marcel and Ebke, Hans-Christian and Alliez, Pierre and Kobbelt, Leif},
title = {Integer-grid maps for reliable quad meshing},
year = {2013},
issue_date = {July 2013},
publisher = {Association for Computing Machinery},
address = {New York, NY, USA},
volume = {32},
number = {4},
issn = {0730-0301},
url = {https://doi.org/10.1145/2461912.2462014},
doi = {10.1145/2461912.2462014},
journal = {ACM Trans. Graph.},
month = jul,
articleno = {98},
numpages = {12}
}

@article{hinderink2024,
author = {Hinderink, Steffen and Br\"{u}ckler, Hendrik and Campen, Marcel},
title = {Bijective Volumetric Mapping via Star Decomposition},
year = {2024},
issue_date = {December 2024},
publisher = {Association for Computing Machinery},
address = {New York, NY, USA},
volume = {43},
number = {6},
issn = {0730-0301},
url = {https://doi.org/10.1145/3687950},
doi = {10.1145/3687950},
journal = {ACM Trans. Graph.},
month = nov,
articleno = {168},
numpages = {11}
}

@article{jiang2020,
author = {Jiang, Zhongshi and Schneider, Teseo and Zorin, Denis and Panozzo, Daniele},
title = {Bijective projection in a shell},
year = {2020},
issue_date = {December 2020},
publisher = {Association for Computing Machinery},
address = {New York, NY, USA},
volume = {39},
number = {6},
issn = {0730-0301},
url = {https://doi.org/10.1145/3414685.3417769},
doi = {10.1145/3414685.3417769},
journal = {ACM Trans. Graph.},
month = nov,
articleno = {247},
numpages = {18}
}

@article{jiang2021,
author = {Jiang, Zhongshi and Zhang, Ziyi and Hu, Yixin and Schneider, Teseo and Zorin, Denis and Panozzo, Daniele},
title = {Bijective and coarse high-order tetrahedral meshes},
year = {2021},
issue_date = {August 2021},
publisher = {Association for Computing Machinery},
address = {New York, NY, USA},
volume = {40},
number = {4},
issn = {0730-0301},
url = {https://doi.org/10.1145/3450626.3459840},
doi = {10.1145/3450626.3459840},
journal = {ACM Trans. Graph.},
month = jul,
articleno = {157},
numpages = {16}
}

@article{liu2024,
author = {Liu, Shibo and Ji, Yang and Guo, Jia-Peng and Liu, Ligang and Fu, Xiao-Ming},
title = {Smooth Bijective Projection in a High-order Shell},
year = {2024},
issue_date = {July 2024},
publisher = {Association for Computing Machinery},
address = {New York, NY, USA},
volume = {43},
number = {4},
issn = {0730-0301},
url = {https://doi.org/10.1145/3658207},
doi = {10.1145/3658207},
journal = {ACM Trans. Graph.},
month = jul,
articleno = {59},
numpages = {13}
}

@article{zhang2006vector,
  title={Vector field design on surfaces},
  author={Zhang, Eugene and Mischaikow, Konstantin and Turk, Greg},
  journal={ACM Transactions on Graphics (ToG)},
  volume={25},
  number={4},
  pages={1294--1326},
  year={2006},
  publisher={ACM New York, NY, USA}
}

@article{knoppel2013globally,
  title={Globally optimal direction fields},
  author={Kn{\"o}ppel, Felix and Crane, Keenan and Pinkall, Ulrich and Schr{\"o}der, Peter},
  journal={ACM Transactions on Graphics (ToG)},
  volume={32},
  number={4},
  pages={1--10},
  year={2013},
  publisher={ACM New York, NY, USA}
}

@article{myles2014robust,
  title={Robust field-aligned global parametrization.},
  author={Myles, Ashish and Pietroni, Nico and Zorin, Denis},
  journal={ACM Trans. Graph.},
  volume={33},
  number={4},
  pages={135--1},
  year={2014}
}

@inproceedings{bommes2013quad,
  title={Quad-mesh generation and processing: A survey},
  author={Bommes, David and L{\'e}vy, Bruno and Pietroni, Nico and Puppo, Enrico and Silva, Claudio and Tarini, Marco and Zorin, Denis},
  booktitle={Computer graphics forum},
  volume={32},
  number={6},
  pages={51--76},
  year={2013},
  organization={Wiley Online Library}
}

@article{fisher2007design,
  title={Design of tangent vector fields},
  author={Fisher, Matthew and Schr{\"o}der, Peter and Desbrun, Mathieu and Hoppe, Hugues},
  journal={ACM transactions on graphics (TOG)},
  volume={26},
  number={3},
  pages={56--es},
  year={2007},
  publisher={ACM New York, NY, USA}
}

@article{ray2006periodic,
  title={Periodic global parameterization},
  author={Ray, Nicolas and Li, Wan Chiu and L{\'e}vy, Bruno and Sheffer, Alla and Alliez, Pierre},
  journal={ACM Transactions on Graphics (TOG)},
  volume={25},
  number={4},
  pages={1460--1485},
  year={2006},
  publisher={ACM New York, NY, USA}
}

@article{diamanti2015integrable,
  title={Integrable polyvector fields},
  author={Diamanti, Olga and Vaxman, Amir and Panozzo, Daniele and Sorkine-Hornung, Olga},
  journal={ACM Transactions on Graphics (TOG)},
  volume={34},
  number={4},
  pages={1--12},
  year={2015},
  publisher={ACM New York, NY, USA}
}

@article{gortler2006,
title = {Discrete one-forms on meshes and applications to 3D mesh parameterization},
journal = {Computer Aided Geometric Design},
volume = {23},
number = {2},
pages = {83-112},
year = {2006},
issn = {0167-8396},
doi = {https://doi.org/10.1016/j.cagd.2005.05.002},
url = {https://www.sciencedirect.com/science/article/pii/S0167839605000531},
author = {Steven J. Gortler and Craig Gotsman and Dylan Thurston}
}

@article{tutte1963,
author = {Tutte, W. T.},
title = {How to Draw a Graph},
journal = {Proceedings of the London Mathematical Society},
volume = {s3-13},
number = {1},
pages = {743-767},
doi = {https://doi.org/10.1112/plms/s3-13.1.743},
url = {https://londmathsoc.onlinelibrary.wiley.com/doi/abs/10.1112/plms/s3-13.1.743},
eprint = {https://londmathsoc.onlinelibrary.wiley.com/doi/pdf/10.1112/plms/s3-13.1.743},
year = {1963}
}

@article{floater1997parametrization,
  title={Parametrization and smooth approximation of surface triangulations},
  author={Floater, Michael S},
  journal={Computer aided geometric design},
  volume={14},
  number={3},
  pages={231--250},
  year={1997},
  publisher={Elsevier}
}

@article{floater2003one,
  title={One-to-one piecewise linear mappings over triangulations},
  author={Floater, Michael},
  journal={Mathematics of Computation},
  volume={72},
  number={242},
  pages={685--696},
  year={2003}
}

@inproceedings{sun2024tuttenet,
  title={TutteNet: Injective 3D Deformations by Composition of 2D Mesh Deformations},
  author={Sun, Bo and Groueix, Thibault and Song, Chen and Huang, Qixing and Aigerman, Noam},
  booktitle={Proceedings of the IEEE/CVF Conference on Computer Vision and Pattern Recognition},
  pages={21378--21389},
  year={2024}
}

@inproceedings{aigerman2024generative,
  title={Generative escher meshes},
  author={Aigerman, Noam and Groueix, Thibault},
  booktitle={ACM SIGGRAPH 2024 Conference Papers},
  pages={1--11},
  year={2024}
}

@article{jiang2020shapeflow,
  title={Shapeflow: Learnable deformation flows among 3d shapes},
  author={Jiang, Chiyu and Huang, Jingwei and Tagliasacchi, Andrea and Guibas, Leonidas J},
  journal={Advances in Neural Information Processing Systems},
  volume={33},
  pages={9745--9757},
  year={2020}
}

@article{panozzo2014frame,
  title={Frame fields: Anisotropic and non-orthogonal cross fields additional material},
  author={Panozzo, Daniele and Puppo, Enrico and Tarini, Marco and Sorkine-Hornung, Olga},
  journal={Proceedings of the ACM TRANSACTIONS ON GRAPHICS (PROCEEDINGS OF ACM SIGGRAPH},
  volume={3},
  year={2014}
}

@inproceedings{bommes2010practical,
  title={Practical mixed-integer optimization for geometry processing},
  author={Bommes, David and Zimmer, Henrik and Kobbelt, Leif},
  booktitle={International conference on curves and surfaces},
  pages={193--206},
  year={2010},
  organization={Springer}
}

@article{palacios2007rotational,
  title={Rotational symmetry field design on surfaces},
  author={Palacios, Jonathan and Zhang, Eugene},
  journal={ACM Transactions on Graphics (TOG)},
  volume={26},
  number={3},
  pages={55--es},
  year={2007},
  publisher={ACM New York, NY, USA}
}

@inproceedings{diamanti2014designing,
  title={Designing N-PolyVector fields with complex polynomials},
  author={Diamanti, Olga and Vaxman, Amir and Panozzo, Daniele and Sorkine-Hornung, Olga},
  booktitle={Computer Graphics Forum},
  volume={33},
  number={5},
  pages={1--11},
  year={2014},
  organization={Wiley Online Library}
}

@inproceedings{xia2010direct,
  title={Direct-product volumetric parameterization of handlebodies via harmonic fields},
  author={Xia, Jiazhi and He, Ying and Yin, Xiaotian and Han, Shuchu and Gu, Xianfeng},
  booktitle={2010 Shape Modeling International Conference},
  pages={3--12},
  year={2010},
  organization={IEEE}
}

@article{lin2015interior,
  title={Interior structure transfer via harmonic 1-forms},
  author={Lin, Juncong and Xia, Jiazhi and Gao, Xing and Liao, Minghong and He, Ying and Gu, Xianfeng},
  journal={Multimedia Tools and Applications},
  volume={74},
  number={1},
  pages={139--158},
  year={2015},
  publisher={Springer}
}

@inproceedings{hefetz2019subspace,
  title={A subspace method for fast locally injective harmonic mapping},
  author={Hefetz, Eden Fedida and Chien, Edward and Weber, Ofir},
  booktitle={Computer Graphics Forum},
  volume={38},
  number={2},
  pages={105--119},
  year={2019},
  organization={Wiley Online Library}
}

@article{bright2017harmonic,
  title={Harmonic global parametrization with rational holonomy},
  author={Bright, Alon and Chien, Edward and Weber, Ofir},
  journal={ACM Transactions on Graphics (TOG)},
  volume={36},
  number={4},
  pages={1--15},
  year={2017},
  publisher={ACM New York, NY, USA}
}

@article{pinkall1993,
author = {Ulrich Pinkall and Konrad Polthier},
title = {Computing Discrete Minimal Surfaces and Their Conjugates},
journal = {Experimental Mathematics},
volume = {2},
number = {1},
pages = {15--36},
year = {1993},
publisher = {Taylor \& Francis},
doi = {10.1080/10586458.1993.10504266},
url = {https://doi.org/10.1080/10586458.1993.10504266},
eprint = {https://doi.org/10.1080/10586458.1993.10504266},
}

@inproceedings{kingma2015,
  author       = {Diederik P. Kingma and
                  Jimmy Ba},
  editor       = {Yoshua Bengio and
                  Yann LeCun},
  title        = {Adam: {A} Method for Stochastic Optimization},
  booktitle    = {3rd International Conference on Learning Representations, {ICLR} 2015,
                  San Diego, CA, USA, May 7-9, 2015, Conference Track Proceedings},
  year         = {2015},
  url          = {http://arxiv.org/abs/1412.6980},
  bibsource    = {dblp computer science bibliography, https://dblp.org}
}

@article{floater2003mean,
  title={Mean value coordinates},
  author={Floater, Michael S},
  journal={Computer aided geometric design},
  volume={20},
  number={1},
  pages={19--27},
  year={2003},
  publisher={Elsevier}
}

@inproceedings{kraevoy2004,
author = {Kraevoy, Vladislav and Sheffer, Alla},
title = {Cross-parameterization and compatible remeshing of 3D models},
year = {2004},
isbn = {9781450378239},
publisher = {Association for Computing Machinery},
address = {New York, NY, USA},
url = {https://doi.org/10.1145/1186562.1015811},
doi = {10.1145/1186562.1015811},
booktitle = {ACM SIGGRAPH 2004 Papers},
pages = {861–869},
numpages = {9},
location = {Los Angeles, California},
series = {SIGGRAPH '04}
}

@InProceedings{morreale2021,
title={Neural Surface Maps},
author={Morreale, Luca and Aigerman, Noam and Kim, Vladimir G and Mitra, Niloy J},
booktitle={Proceedings of the IEEE/CVF Conference on Computer Vision and Pattern Recognition},
pages={4639--4648},
year={2021}
}
\appendix
\section{Proofs of Lemmas}
\label{app:lemma}
\begin{proof}[Proof of Lemma~\ref{lem:flux_1form_equiv}]
    Let $\e=(i,j)$ be a half-edge, let $\tri=(i,j,k)$ be its triangle, and define the edge vector as $\dir=\frac{v_j-v_i}{\norm{v_i-v_j}}$. In standard linear FEM fashion, consider $\z$ as defining an affine function over the triangle, $z(p)=a^Tp+b$ for any point $p\in\tri$, uniquely defined as requiring that it interpolates the values $\z_i,\z_j,\z_k$ on the vertices, i.e., $z(v_\alpha)=\z_\alpha, \alpha\in\set{i,j,k}$. We will prove that the sign of the directional derivative $\frac{\partial z}{\partial \dir}$ in the direction $\dir$ is equal to both terms in the lemma. First, by restricting $z$ to a one-dimensional function over the edge, we have $\frac{\partial z}{\partial \dir}=\frac{\z_j-\z_i}{\norm{v_i-v_j}}\overset{\eqref{eq:1form}}{=}\frac{\delta_\e}{\norm{v_i-v_j}}$; therefore, $\sign{\frac{\partial z}{\partial \dir}} =\sign{\delta_\e}$.
    Second, a directional derivative is also equal to the inner product between the gradient and the direction, i.e.,  $\frac{\partial z}{\partial \dir}=\tribrac{\nabla_\tri z,\dir}$. Since applying the same rotation to both vectors does not  change their inner product, we have $\frac{\partial z}{\partial \dir}=\tribrac{\textbf{R}^{90^\circ}\nabla_\tri z,\textbf{R}^{90^\circ}\dir}=\tribrac{\vv_\tri,-\n_\e}$. Thus, we get $\sign{\frac{\partial z}{\partial \dir}} = -\sign{\tribrac{\vv_\tri,\n_\e}} $. 
    Hence, $\sign{\delta_\e} = -\sign{\tribrac{\vv_\tri,\n_\e}}=\flux_\e$.
\end{proof}
\begin{proof}[Proof of Lemma~\ref{lem:us_gortler_equiv}]
We first convert the vertex flux to a more relaxed definition (refer also to Figure~\ref{fig:vertex_inj}):
\begin{definition}
    {Flux parity:} for triangle $(o,i,j)$, we say the triangle  has \emph{even}-parity flux with respect to the vertex $o$ if $\flux_{o,i} = \flux_{j,o}$; otherwise, if $\flux_{o,i} = - \flux_{j,o}$, we say the triangle has \emph{odd}-parity flux  with respect to the vertex $o$. 
\end{definition}
In other words, positive/negative vertex fluxes are even-parity fluxes, and a zero vertex flux is odd-parity (see Figure~\ref{fig:vertex_inj} for an illustration, where triangles are highlighted in red if they have even-parity flux with respect to the 1-ring's center vertex). Hence, translating the conditions of Property~\ref{prop:vert_inj} to parity, it is a necessary condition that two triangles have even-parity flux and the rest have odd-parity flux; the next lemma shows that in fact these two conditions are exactly equivalent, i.e., the parity condition ensures vertex injectivity:

\begin{lemma}
\label{lemma:v_inj}
    i) The flux is an injective \textit{regular} flux over a vertex $o$ iff there are exactly two triangles $\tri^1,\tri^2$ of the vertex's 1-ring that have \textit{even}-parity vertex flux (with respect to $o$) and all the rest of the triangles have odd-parity vertex flux. ii) The flux is an injective \textit{vortex} flux iff all triangles have \textit{odd}-parity vertex flux (with respect to $o$). 
\end{lemma}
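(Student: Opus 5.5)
Both directions of (i) and (ii) are immediate once we use that, by Definition~\ref{def:vertex_flux}, a triangle has even parity with respect to $o$ exactly when its vertex flux is $\pm1$, and odd parity exactly when it is $0$. The forward directions (injective $\Rightarrow$ parity pattern) are then read off the classification of Section~\ref{ss:injective_flux}. A regular vertex has exactly one triangle with flux $-1$, one with flux $+1$, and zero elsewhere, so exactly two triangles are even. A vortex vertex has all triangles at zero flux, so all triangles are odd.

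The real content is the converse of (i). We must show that if exactly two triangles are even, then one of them has vertex flux $+1$ and the other $-1$ (rather than both $+1$ or both $-1$). To see this, walk around the 1-ring of $o$ in cyclic order using the edge-injectivity already established (Property~\ref{prop:edge_inj}, via Lemma~\ref{lem:flux_1form_equiv}).

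Label the outgoing spokes $(o,1),\dots,(o,k)$ and set $s_m=\flux_{(o,m)}$.
\begin{enumerate}
\item Triangle $(o,m,m+1)$ contains the half-edges $(o,m)$ and $(m+1,o)$.
\item By edge injectivity, $\flux_{(m+1,o)}=-\flux_{(o,m+1)}=-s_{m+1}$.
\item Hence the triangle is even iff $s_m=-s_{m+1}$, i.e.\ iff the spoke sign changes between consecutive spokes. Odd triangles are exactly those where the sign does not change.
\item Its vertex flux, when even, equals $s_m$: it is $+1$ when the change goes from $+$ to $-$, and $-1$ when it goes from $-$ to $+$.
\end{enumerate}

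Around a closed cycle, sign changes from $+$ to $-$ and from $-$ to $+$ alternate, so their numbers are equal. With exactly two changes there is one of each, which gives one triangle with vertex flux $+1$ and one with $-1$, all others $0$. This is precisely the regular injective case.

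For the converse of (ii), if all triangles are odd then every vertex flux is $0$, which is the vortex case by definition.

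The main obstacle is fixing the orientation bookkeeping correctly. We must check that the half-edge of triangle $(o,m,m+1)$ entering $o$ is the twin of the outgoing spoke $(o,m+1)$, which requires the consistent orientation of the sphere mesh. Once that is in place, the alternation argument is a simple counting step.

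This computation also shows that the number of even triangles always equals the number of sign changes of $\delta$ around $o$. This is what subsequently yields Lemma~\ref{lem:us_gortler_equiv}.
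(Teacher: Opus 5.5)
Your proof is correct and takes essentially the same route as the paper. Both arguments use edge anti-symmetry around the closed 1-ring: an odd triangle propagates the outgoing spoke sign, an even triangle flips it, and so the two even triangles must carry opposite vertex fluxes. The only difference is packaging. The paper starts, without loss of generality, from a positive even triangle and inducts around the ring until it reaches the second even one. You instead state this directly as the alternation of $+\to-$ and $-\to+$ sign changes in the cyclic spoke sequence, a correspondence the paper makes explicit only afterwards, in the proof of Lemma~\ref{lem:us_gortler_equiv}.
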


We prove this lemma later.
Now, as illustrated in Figure~\ref{fig:vertex_inj}, note that two consecutive edges $(o,i),(o,i+1)$ of the 1-ring correspond to the oriented half-edges $(o,i),(i+1,o)$ of the triangle $(o,i,i+1)$. Hence, if there's no sign change ($\flux_{(o,i)}=\flux_{(o,i+1)}$), then by the anti-symmetry of the 1-form we have $\flux_{(o,i)}=-\flux_{(i+1,o)}$, i.e., the triangle has odd-parity flux into $o$. Similarly, if there is a sign change ($\flux_{(o,i)}=-\flux_{(o,i+1)}$), then by the anti-symmetry of the 1-form we have $\flux_{(o,i)}=\flux_{(i+1,o)}$ and the triangle has even-parity flux into $o$. 

In sum: an index of $1$ is defined as no sign changes in the 1-ring, in turn meaning all triangles in the 1-ring have odd-parity flux, which by Lemma~\ref{lemma:v_inj} entails a vortex injective vertex, as defined in Subsection~\ref{ss:injective_flux} (and vice versa: having all odd-parity flux triangles entails an index of $1$). 

Similarly, an index of $0$ is defined as exactly two sign changes, hence exactly two triangles of the 1-ring  have even-parity flux, which per Lemma~\ref{lemma:v_inj} entails a regular injective vertex, as defined in Subsection~\ref{ss:injective_flux} (and vice versa: two even-parity triangles entail an index of $0$). 

All other numbers of sign changes have negative index, and all correspond to the last remaining case, of a non-injective vertex. With this, we have proven Lemma~\ref{lem:us_gortler_equiv}. \end{proof}

\begin{proof}[Proof of Lemma~\ref{lem:non_neg_index}]
We begin by reducing the equation in Theorem 3.5 in~\cite{gortler2006} to our setting. The original equation stipulates that for a polygonal mesh of genus $g$ with a given 1-form $\delta$, 
\begin{equation}
\label{eq:gortler}
    \sum_{v \in \textbf{V}} \underbrace{\ind{v}}_{\text{vertex index}} + \sum_{\tri \in \textbf{T}} \underbrace{\ind{\tri}}_{\text{face index}} = 2 - 2g.
\end{equation}
Aside from the vertex index term (defined in Subsection~\ref{ss:injectivity_proof}), there is an additional face index term; however, it is irrelevant to us: this is computed for \emph{polygonal} meshes, by walking over the (polygonal) face's edges and counting how many sign changes occur; for our case this term is always zero since, by definition of $\delta_i$ (Equation~\eqref{eq:1form}), we have that for a triangle with edges $\e_1,\e_2,\e_3$ it holds that $\delta_1+\delta_2+\delta_3 =0$,  entailing exactly two sign changes, thereby leading to zero contribution. Plugging in the genus of the sphere ($g=0$) to the right-hand side of Equation~\eqref{eq:gortler}, we get from their theorem that our 1-form $\delta$ satisfies
\begin{equation}
    \sum_{v\in\mathcal{V}} \text{ind}\parr{v} = 2.
\end{equation}
We note that all vertices except the source and sink satisfy the discrete harmonic equation, and thus it is well-known (and easy to prove) that they satisfy a discrete \textit{maximum principle}: a vertex $o$ has a vertex $v_j$ in its 1-ring with a higher value $\z_o<\z_j$, and a vertex $v_k$ with a lower value $\z_o>\z_k$. By the definition of $\delta$, Equation~\eqref{eq:1form}, this entails $\delta_{(o,j)}>0$, and $\delta_{(o,k)}<0$. As such, there must be at least two sign changes transpiring across the edges of the 1-ring of $o$. Thus  the index of all vertices is non-positive, except the source and sink. Since the indices sum to $2$, this implies both the source and the sink must have the maximal index, i.e., $1$. With that, the sum of the indices of all the other vertices must be exactly zero. Due to non-positivity, this entails that the index of each of those vertices is  zero. \end{proof}

\begin{proof}[Proof of Lemma~\ref{lemma:v_inj}]
($\Rightarrow$) i) Assume the vertex flux is regular injective, i.e., per the definition in Section~\ref{ss:injective_flux}, Property~\ref{prop:vert_inj}, there is one triangle $\tri^+$ with positive vertex flux, and one $\tri^-$ with negative vertex flux, and all others with zero vertex flux. Then, by the definition of parity, both $\tri^+,\tri^-$ have even-parity vertex flux while the rest have odd-parity vertex flux, as desired. ii) Assume the vertex flux is vortex injective. Then all triangles have zero vertex flux, which has odd parity.

($\Leftarrow$) i) Assume that  there are exactly two  triangles $\tri^1,\tri^2$ with even-parity vertex flux, with the rest having odd-parity vertex flux. We need to show that the definition of regular injective vertex from Subsection~\ref{ss:injective_flux}, Property~\ref{prop:vert_inj} is satisfied. Odd-parity flux triangles have zero vertex flux; hence, all that remains is to prove that one of $\tri^1,\tri^2$ has positive vertex flux, and one has negative vertex flux (as opposed to both positive or both negative). Without loss of generality, assume $\tri^1$ has positive vertex flux. Due to Equation~\eqref{eq:vertex_flux}, we have $\flux_{(o,i)}=\flux_{(i+1,o)}=+1$. Using the anti-symmetry between a half-edge and its twin (Equation~\eqref{eq:1form}), we have $\flux_{i+1,o}=-\flux_{o,i+1}$, entailing $\flux_{o,i+1}=-1$. 
We consider two cases:
\begin{enumerate}
    \item If $(o,i+1)$ is a half-edge of $\tri^2$, we are done: since $\tri^2$ has even-parity flux, its other edge satisfies $\flux_{\parr{i+2,o}}=\flux_{\parr{o,i+1}}=-1$, and thus both have negative edge  flux, entailing  the vertex flux is negative.
    \item If the half-edge belongs to a triangle other than  $\tri^2$, it is a triangle with odd-parity flux, implying $\flux_{(i+2,o)}=-\flux_{(o,i+1)}=-(-1)=+1$. By induction, we can show that when iterating over all triangles until reaching $\tri^2$, for each triangle $(o,j,j+1)$ we have  $\flux_{(j,o)}=1$ and $\flux_{(o,j+1)}=-1$. Once we reach $\tri^2$, we are once again in the setting of case 1, and the proof is concluded. 
\end{enumerate}
ii) Assume all triangles have odd-parity vertex flux; then there are no sign changes, and each triangle must have zero vertex flux, i.e., entailing the vertex flux is vortex injective.
\end{proof}

\newpage

\end{document}